\newtheorem{theorem}{Theorem}[section]
\newtheorem{lemma}[theorem]{Lemma}
\newtheorem{algorithm}[theorem]{Algorithm}
\theoremstyle{definition}
\numberwithin{equation}{section}
\newcommand{\satop}[2]{\stackrel{\scriptstyle{#1}}{\scriptstyle{#2}}}
\newcommand{\bsgamma}{{\boldsymbol{\gamma}}}
\newcommand{\bsh}{{\boldsymbol{h}}}
\newcommand{\bsell}{{\boldsymbol{\ell}}}
\newcommand{\bsx}{{\boldsymbol{x}}}
\newcommand{\bsq}{{\boldsymbol{q}}}
\newcommand{\bsz}{{\boldsymbol{z}}}
\newcommand{\bszero}{{\boldsymbol{0}}}
\newcommand{\rd}{\mathrm{d}}
\newcommand{\ri}{\mathrm{i}}
\newcommand{\bbZ}{\mathbb{Z}}
\newcommand{\bbN}{\mathbb{N}}
\newcommand{\calA}{\mathcal{A}}
\newcommand{\calO}{\mathcal{O}}
\newcommand{\calT}{\mathcal{T}}
\newcommand{\scrP}{\mathscr{P}}
\newcommand{\scrQ}{\mathscr{Q}}
\newcommand{\mask}[1]{}
\newcommand{\supp}{{\mathrm{supp}}}
\newcommand{\setu}{{\mathfrak{u}}}
\newcommand{\setw}{{\mathfrak{w}}}
\begin{document}

\title
{Lattice Algorithms for Multivariate Approximation
\\
in Periodic Spaces with General Weight Parameters
}

\author{
    Ronald Cools\footnote{Department of Computer Science, KU Leuven,
        Celestijnenlaan 200A, 3001 Leuven, Belgium,
        \texttt{(ronald.cools|dirk.nuyens)@cs.kuleuven.be}}
\and
    Frances Y.~Kuo\footnote{School of Mathematics and Statistics,
        University of New South Wales, Sydney NSW 2052, Australia,
        \texttt{(f.kuo|i.sloan)@unsw.edu.au}}
\and
    Dirk Nuyens\footnotemark[1]
\and
    Ian H.~Sloan\footnotemark[2]
}

\date{October 2019}
\maketitle

\begin{abstract}
This paper provides the theoretical foundation for the construction of
lattice algorithms for multivariate $L_2$ approximation in the worst case
setting, for functions in a periodic space with general weight parameters.
Our construction leads to an error bound that achieves the best possible
rate of convergence for lattice algorithms. This work is motivated by PDE
applications in which bounds on the norm of the functions to be
approximated require special forms of weight parameters (so-called POD
weights or SPOD weights), as opposed to the simple product weights covered
by the existing literature. Our result can be applied to other
lattice-based approximation algorithms, including kernel methods or
splines.
\\[2mm]
\textbf{AMS Subject Classification: }41A10, 41A15, 65D30, 65D32, 65T40.
\end{abstract}

\maketitle

\section{Introduction}

This paper provides a theoretical foundation for the construction of
lattice algorithms for multivariate $L_2$ approximation in the worst case
setting, for functions in a periodic space with general weight parameters.
Our construction leads to an error bound that achieves the best possible
rate of convergence for lattice algorithms. We will provide a background
in the Introduction, assuming little prior knowledge from the reader, and
highlight our new contribution together with our motivation for this work.
Section~\ref{sec:form} provides the mathematical formulation of the
problem and reviews known results, while Section~\ref{sec:cbc} proves the
main theorem.

\emph{Lattice rules} have been developed since the late 1950s as cubature
rules for multivariate periodic integrands characterized by absolutely
convergent Fourier series. In recent years lattice rules have also been
successfully used for non-periodic integrands (by way of random shifts or
tent transformation). Lattice rules represent a branch of the family of
\emph{quasi-Monte Carlo \textnormal{(}QMC\textnormal{)} methods}. The
other significant branch of QMC methods encompasses \emph{digital nets and
sequences}. Reference books and surveys include
\cite{Nie92,SJ94,Hic98b,HH02,CN08,Lem09,DP10,LM12,DKS13,LP14,Nuy14}. Our
interest lies in the situations where the dimensionality, $d$, or the
number of variables, is very large, say, in the hundreds or thousands.
Much of the research focus in the last two decades has been on the concept
of \emph{strong tractability} \cite{NW08,NW10,NW12}: loosely speaking, it
means seeking error bounds that are independent of dimension $d$ (or, in
the case of \emph{polynomial tractability}, with error bounds that grow
only polynomially as $d$ increases).

By now it is well known that these desired \emph{dimension-independent
error bounds} hold for carefully chosen lattice rules and suitably defined
\emph{weighted function space} for the integrands \cite{DKS13}. The first
studied setting involves the so-called \emph{product weights}
\cite{SW98,SW01}, where one weight parameter $\gamma_j>0$ is associated
with each coordinate direction $x_j$ to describe the significance of the
integrand $f(\bsx) = f(x_1,\ldots,x_d)$ in the direction $x_j$. The
typical condition to ensure dimension independence is that the sequence
$\{\gamma_j\}$ decays fast enough to ensure that the series is summable,
i.e., $\sum_{j\ge 1} \gamma_j < \infty$. Faster rates of decay of the
weights $\gamma_j$ then enable faster rates of convergence in the
dimension-independent error bounds, provided that the integrands are
sufficiently smooth and the cubature rules are capable of benefitting from
the higher smoothness.

The \emph{component-by-component} (\emph{CBC}) \emph{construction} of
lattice rules that can achieve dimension-independent cubature error bounds
in weighted spaces is another milestone in the past 20 years
\cite{SR02,DKS13}. These constructions are proved to achieve the
\emph{optimal convergence rates} \cite{Kuo03}, while \emph{fast CBC
algorithms} (based on the fast Fourier transform) allow these
constructions to easily reach tens of thousands of dimensions with
millions of points \cite{NC06a,Nuy14}. For example, in the periodic
Hilbert space setting where the squared Fourier coefficients decay at the
rate of $\alpha>1$ (corresponding roughly to $\alpha/2$ available mixed
derivatives), the optimal convergence rate is
$\calO(n^{-\alpha/2+\delta})$, $\delta>0$, where the implied constant is
independent of $d$ provided that $\sum_{j\ge 1}
\gamma_j^{1/\alpha}<\infty$ for the case of product weights, while the
cost for a fast CBC construction with $n$ points up to dimension $d$ is
$\calO(d\,n\log(n))$ operations.

Lattice rules have also been analyzed in the context of multivariate
approximation. We refer to the resulting algorithms as \emph{lattice
algorithms}; they can be described as follows. For a function with an
absolutely convergent Fourier series, we approximate this function by
first truncating the series expansion to a finite index set, and then
approximating the remaining Fourier coefficients (which are integrals of
the function against each basis function) by lattice rules. Lattice
algorithms have been considered in a number of settings. For example, in
the worst case $L_2$ setting considered in this paper, we measure the
approximation error in the $L_2$ norm, and consider the largest possible
error for functions over the unit ball of our function space. Two main
strategies have been employed in the literature: one strategy is to
construct lattice algorithms to directly minimize the error bound
\cite{KSW06,KSW08}; the other strategy is to construct lattice algorithms
which exactly reconstruct the function on a given finite index set (the
latter are called \emph{reconstruction lattices})
\cite{Kam13,KPV15,PV16,BKUV17,KMNN}. Both strategies can make use of CBC
constructions. It is also possible to combine both strategies in one CBC
construction. {Also related are \emph{spline algorithms} or \emph{kernel
methods} \cite{ZLH06,ZKH09} and collocation \cite{LH03,SNC16} using
lattice points.

Though product weights are easy to work with, they may not be the
appropriate model to describe the dimension structure of the target
function, as we now explain. In accordance with the concepts of effective
dimension \cite{CMO97} and multivariate decomposition \cite{KSWW10a},
every function in $d$ dimensions can be written (in more than one way) as
a sum of $2^d$ terms, $f = \sum_{\setu\subseteq\{1:d\}} f_\setu$, where
each term $f_\setu$ depends only on a subset $\setu$ of the $d$ variables,
namely, $x_j$ for $j\in\setu$. By using an appropriate orthogonal
decomposition for the function space, the terms $f_\setu$ are mutually
orthogonal. We can moderate the importance of each term $f_\setu$ by using
a weight parameter $\gamma_\setu$ for each subset $\setu$ in the function
space definition. These weights $\gamma_\setu$ are called \emph{general
weights} \cite{DSWW06,DKS13}. Product weights are then the special case of
general weights in which $\gamma_\setu = \prod_{j\in\setu} \gamma_j$, that
is, the weight associated with the group of variables indexed by the
subset $\setu$ is obtained by taking the product of the weights $\gamma_j$
corresponding to the variables $x_j$ with $j\in\setu$. The full generality
of general weights allows more flexibility in modeling the functions, but
comes at an exponential cost in $d$ for the CBC construction. So
compromises have been made by researchers by imposing further structure on
the weights, including \emph{order dependent weights} where $\gamma_\setu$
depends only on the cardinality of the set $\setu$, and \emph{finite order
weights} where $\gamma_\setu$ is zero for all $\setu$ with cardinality
greater than a prescribed number. More interestingly, recent works on PDEs
with random coefficients \cite{KSS12,DKLNS14,GKNSSS15,KN16,GKNSS18b} have
led to the invention of a new form of weights called \emph{POD weights} --
\emph{product and order dependent weights}, which combines the features of
product weights and order dependent weights, and even to \emph{SPOD
weights} -- \emph{smoothness-driven product and order dependent weights},
which involves an inner structure depending on the smoothness property of
the function space.

A theoretical justification for the CBC construction of lattice rules for
integration under the general weights setting has been known for some time
\cite{DSWW06}, while fast CBC algorithms for POD weights and SPOD weights
have only been developed in recent times, driven by the need in PDE
applications \cite{KSS12,DKLNS14}. The basic model involves an elliptic
PDE with a random coefficient \cite{CDS10,KSS12,KN16} which is
parameterized by a sequence of stochastic variables (our integration
variables) and the goal is to compute the expected value (an integral with
respect to the large number or even an infinite number of stochastic
variables) of a linear functional $G(\cdot)$ of the PDE solution $u$ with
respect to the spatial variables (with spatial dimension $1$, $2$, or
$3$). To be able to apply the known integration error bounds for lattice
rules, a key step in the analysis is to estimate the norm of the integrand
$f = G(u)$. This requires us to ``differentiate the PDE''
\cite{CDS10,KN16}, to obtain the regularity of $G(u)$ with respect to the
stochastic variables. Estimates of the norm and our desire for
dimension-independent error bounds together lead to the choice of POD
weights or SPOD weights for the function space, and in turn create the
need to construct lattice rules appropriate to these weights.

\subsection*{New contribution}

Motivated by the strong desire to obtain higher order moments or other
statistics of the quantities of interest rather than just the expected
value, we seek in future work to apply lattice algorithms directly to the
PDE solution $u$ at all spatial points as a function of the stochastic
variables. However, all presently available theory on lattice algorithms
for approximation has been for the unweighted setting or just with product
weights. So to proceed we must
\begin{itemize}
\item provide a theoretical justification in the periodic setting for
    the CBC construction of lattice algorithms for approximation
   with general weights; and
\item develop the fast CBC algorithms for the construction of lattice
    algorithms with special structure of weights, especially POD
    weights and SPOD weights.
\end{itemize}
This paper will address the first point, while a companion paper
\cite{CKNS-part2} will address the second point. Both papers involve novel
elements and significant new results that cannot be obtained by trivial
generalizations of existing results.

Specifically, in the periodic Hilbert space setting where the squared
Fourier coefficients decay at the rate of $\alpha>1$, the optimal
convergence rate for integration is $\calO(n^{-\alpha/2+\delta})$,
$\delta>0$, as mentioned earlier, see \cite{SW01}. The optimal algorithm
for $L_2$ approximation in this setting based on the class of
\emph{arbitrary linear information} (implying that all Fourier
coefficients can be obtained exactly) can achieve the same convergence
rate $\calO(n^{-\alpha/2+\delta})$, $\delta>0$, see \cite{NSW04}. However,
if we restrict to the class of \emph{standard information} where only
function values are available, then it has been an open problem whether
the same rate can be achieved with no dependence of the error bound on the
dimension $d$. A general (non-constructive) result in \cite{KWW09a} yields
the convergence rate $\calO(n^{-(\alpha/2)[1/(1+1/\alpha)]+\delta})$,
$\delta>0$, which is nearly optimal for large $\alpha$ but loses a factor
of nearly $1/2$ in the rate when $\alpha$ is small. A very recent
manuscript \cite{KUnew} appears to have solved this open problem.

For algorithms that use function values at lattice points, it is proved
that the best possible convergence rate is $\calO(n^{-\alpha/4+\delta})$,
$\delta>0$; see \cite{BKUV17} for a lower bound which proved the
unavoidable gap in the convergence rates between integration and
approximation. We prove in this paper that a generating vector for a
lattice algorithm and general weights can be obtained by a CBC
construction to achieve this best possible error bound
\[
  \calO(n^{-\alpha/4+\delta}), \quad\delta>0,
\]
with the implied constant independent of $d$, provided that the general
weights satisfy
the condition
$\sum_{\setu\subset\bbN,\,|\setu|<\infty}
\max(|\setu|,1)\, \gamma_\setu^\lambda\,
[2\zeta(\alpha\lambda)]^{|\setu|}<\infty$, where $\lambda =
1/(\alpha-4\delta)$. Here the summation is over all finite subsets of
positive integers $\bbN := \{1,2,\ldots\}$, $|\setu|$~denotes the
cardinality of the set $\setu$, and $\zeta(x) := \sum_{h=1}^\infty h^{-x}$
denotes the Riemann zeta function.

At this point the result can only be said to be semi-constructive, in that
a CBC construction with fully general weights has a prohibitively high
computational cost. In our companion paper \cite{CKNS-part2} we develop
fast CBC algorithms for weights with special structure, including
so-called POD weights and SPOD weights.

Though the best possible convergence rate for algorithms based on lattice
points cannot match the rate of a general optimal algorithm in this
setting (i.e., $\calO(n^{-\alpha/4+\delta})$ versus
$\calO(n^{-\alpha/2+\delta})$, $\delta>0$, see above), lattice-based
algorithms have a number of advantages including simplicity and efficiency
in applications, making them still attractive and competitive.

\section{Problem formulation and review of known results} \label{sec:form}

\subsection{Lattice rules and lattice algorithms}

We consider one-periodic real-valued $L_2$ functions defined on $[0,1]^d$
with absolutely convergent Fourier series
\begin{align*}
  f(\bsx) \,=\, \sum_{\bsh\in\bbZ^d} \hat{f}_\bsh\,e^{2\pi\ri\bsh\cdot\bsx},
  \quad\mbox{with}\quad
  \hat{f}_\bsh \,:=\, \int_{[0,1]^d} f(\bsx)\,e^{-2\pi\ri\bsh\cdot\bsx}\,\rd\bsx,
\end{align*}
where $\hat{f}_\bsh$ are the Fourier coefficients and $\bsh\cdot\bsx =
h_1x_1 + \cdots + h_dx_d$ denotes the usual dot product.

A (rank-$1$) \emph{lattice rule} \cite{SJ94} with $n$ points and
generating vector $\bsz\in \{1,\ldots,n-1\}^d$ approximates the integral
of $f$ by
\begin{align*}
  I(f) \,:=\, \int_{[0,1]^d} f(\bsx)\,\rd\bsx
  \quad\approx\quad
  Q(f) \,:=\, \frac{1}{n} \sum_{k=1}^n f\Big(\Big\{\frac{k\bsz}{n}\Big\}\Big),
\end{align*}
where the braces around a vector indicate that we take the fractional part
of each component in the vector. Using the \emph{character property}
\begin{align*}
  \frac{1}{n} \sum_{k=1}^n e^{2\pi\ri k\bsh\cdot\bsz/n} \,=\,
  \begin{cases}
  1 & \mbox{if } \bsh\cdot\bsz\equiv_n 0, \\
  0 & \mbox{if } \bsh\cdot\bsz\not\equiv_n 0,
  \end{cases}
\end{align*}
it is easy to show that the integration error is
\begin{align} \label{eq:int-err}
  Q(f) - I(f)
  \,=\, \sum_{\satop{\bsh\in\bbZ^d\setminus\{\bszero\}}{\bsh\cdot\bsz\equiv_n 0}} \hat{f}_\bsh,
\end{align}
where $\equiv_n$ denotes congruence modulo $n$.

A \emph{lattice algorithm} for multivariate approximation \cite{KSW06}
with $n$ points and generating vector $\bsz\in \{1,\ldots,n-1\}^d$,
together with an index set $\calA_d\subset \bbZ^d$, approximates the
function $f$ by first truncating the Fourier series to the finite index
set and then approximating the remaining Fourier coefficients by the
lattice cubature points:
\begin{align} \label{eq:Af}
  A(f)(\bsx) \,:=\, \sum_{\bsh\in\calA_d} \hat{f}_\bsh^a \,e^{2\pi\ri\bsh\cdot\bsx},
  \quad\mbox{with}\quad
  \hat{f}_\bsh^a \,:=\, \frac{1}{n} \sum_{k=1}^n f\Big(\Big\{\frac{k\bsz}{n}\Big\}\Big)\,e^{-2\pi\ri k\bsh\cdot\bsz/n}.
\end{align}
The approximation error is
\begin{align*}
  (f - A(f))(\bsx) \,=\, \sum_{\bsh\not\in\calA_d} \hat{f}_\bsh\,e^{2\pi\ri\bsh\cdot\bsx}
  + \sum_{\bsh\in\calA_d} (\hat{f}_\bsh - \hat{f}_\bsh^a)\,e^{2\pi\ri\bsh\cdot\bsx}.
\end{align*}
When measured in the $L_2$ norm over $[0,1]^d$ this leads to
\begin{align} \label{eq:L2-err}
  \|f - A(f)\|_{L_2}^2 \,=\, \sum_{\bsh\not\in\calA_d} \big|\hat{f}_\bsh\big|^2
  + \sum_{\bsh\in\calA_d} \big|\hat{f}_\bsh - \hat{f}_\bsh^a\big|^2.
\end{align}

\subsection{Function space setting with general weights}

For $\alpha>1$ and nonnegative weight parameters
$\bsgamma=\{\gamma_\setu\}$, we consider the Hilbert space $H_d$ of
one-periodic real-valued $L_2$ functions defined on $[0,1]^d$ with
absolutely convergent Fourier series, with norm defined by
\begin{align} \label{eq:r-def}
  \|f\|_d^2 \,:=\, \sum_{\bsh\in\bbZ^d} \big|\hat{f}_\bsh\big|^2\, r(\bsh),
  \quad\mbox{with}\quad
  r(\bsh) \,:=\, \frac{1}{\gamma_{\supp(\bsh)}}\,\prod_{j\in \supp(\bsh)} |h_j|^\alpha,
\end{align}
where $\supp(\bsh) := \{1\le j\le d : h_j \ne 0\}$. The parameter $\alpha$
characterizes the rate of decay of the squared Fourier coefficients, so it
is a smoothness parameter. We fix the scaling of the weights by setting
$\gamma_\emptyset := 1$, so that the norm of a constant function in $H_d$
matches its $L_2$ norm.

Some authors refer to this as the \emph{weighted Korobov space}, see
\cite{SW01} for product weights and \cite{DSWW06} for general weights,
while others call this a weighted variant of the \emph{periodic Sobolev
space with dominating mixed smoothness} \cite{BKUV17}.

When $\alpha\geq 2$ is an even integer, it can be shown that
\begin{align*}
 \|f\|_d^2 \,=\,
 \sum_{\setu\subseteq\{1:d\}}\frac{1}{(2\pi)^{\alpha|\setu|}}\frac{1}{\gamma_{\setu}}
 \int_{[0,1]^{|\setu|}}\!\!
 \bigg(\int_{[0,1]^{d-|\setu|}}\bigg(\prod_{j\in\setu}\frac{\partial}{\partial x_j}\bigg)^{\alpha/2}f(\bsx)
 \,\rd\bsx_{\{1:d\}\setminus\setu}\bigg)^2
 \rd\bsx_{\setu}.
\end{align*}
So $f$ has mixed partial derivatives of order $\alpha/2$. Here $\bsx_\setu
= (x_j)_{j\in\setu}$.

\subsection{Integration}

For the integration problem in the worst case setting, the \emph{worst
case integration error} satisfies
\begin{align*}
  e^{\rm wor\mbox{-}int}_{n,d}(\bsz) \,:=\, \sup_{f\in H_d,\,\|f\|_d\le 1} |I(f) - Q(f)|
  \,=\, \bigg(\sum_{\satop{\bsh\in\bbZ^d\setminus\{\bszero\}}{\bsh\cdot\bsz\equiv_n 0}} \frac{1}{r(\bsh)}\bigg)^{1/2}.
\end{align*}
The initial integration error is $e^{\rm wor\mbox{-}int}_{0,d} \,:=\,
\sup_{f\in H_d,\,\|f\|_d\le 1} |I(f)| = 1$. It is proved in \cite{DSWW06}
that for general weights $\gamma_\setu$, if $n$ is prime, a generating
vector $\bsz$ can be obtained by a CBC construction to achieve the
integration error bound
\begin{align*}
  |I(f) - Q(f)|
  \,\le\, \bigg(\frac{1}{n-1} \sum_{\setu\subset\bbN,\,|\setu|<\infty} \gamma_\setu^\lambda\,
  [2\zeta(\alpha\lambda)]^{|\setu|} \bigg)^{1/(2\lambda)}
  \|f\|_d
  \quad\mbox{for all } \lambda\in (\tfrac{1}{\alpha},1].
\end{align*}
The result generalizes to non-prime $n$, with $n-1$ replaced by the Euler
totient function $\varphi_{\rm tot}(n) := \{1\le z\le n: \gcd(z,n)=1\}$.
Fast CBC algorithms for integration with product weights, order dependent
weights, POD weights and SPOD weights have been developed in
\cite{NC06a,CKN06,KSS11,KKS}.

\subsection{Approximation}

For the approximation problem we can follow \cite{KSW06,KSW08} to define
the index set $\calA_d$ with some parameter $M>0$ by
\begin{align} \label{eq:AdM}
  \calA_d(M) \,:=\, \big\{\bsh\in\bbZ^d : r(\bsh) \le M \big\},
\end{align}
with the difference being that here we have general weights determining
the values of $r(\bsh)$ in \eqref{eq:r-def} while \cite{KSW06,KSW08}
considered product weights. We can then bound the first sum in the $L_2$
approximation error \eqref{eq:L2-err} by
\begin{align*}
  \sum_{\bsh\not\in\calA_d(M)} \big|\hat{f}_\bsh\big|^2
  \,=\, \sum_{\bsh\not\in\calA_d(M)} \big|\hat{f}_\bsh\big|^2 \,r(\bsh)\,\frac{1}{r(\bsh)}
  \,\le\, \|f\|_d^2 \, \frac{1}{M},
\end{align*}
since $r(\bsh)> M$ for $\bsh\notin\calA_d(M)$. The second sum in
\eqref{eq:L2-err} contains the integration error of the function
$g_\bsh(\bsx) := f(\bsx) \,e^{-2\pi\ri\bsh\cdot\bsx}$ so from
\eqref{eq:int-err} we obtain
\begin{align*}
  \big|\hat{f}_\bsh - \hat{f}_\bsh^a\big|^2
  &\,=\, | I(g_\bsh) - Q(g_\bsh)|^2
  \,=\, \bigg| \sum_{\satop{\bsell\in\bbZ^d\setminus\{\bszero\}}{\bsell\cdot\bsz\equiv_n 0}}
  (\widehat{g_\bsh})_\bsell \bigg|^2
  \,=\, \bigg| \sum_{\satop{\bsell\in\bbZ^d\setminus\{\bszero\}}{\bsell\cdot\bsz\equiv_n 0}}
  \hat{f}_{\bsh+\bsell} \bigg|^2 \\
  &\,\le\, \bigg( \sum_{\satop{\bsell\in\bbZ^d\setminus\{\bszero\}}{\bsell\cdot\bsz\equiv_n 0}}
  \big|\hat{f}_{\bsh+\bsell} \big|^2 \, r(\bsh+\bsell)\bigg)\,
  \bigg(\sum_{\satop{\bsell\in\bbZ^d\setminus\{\bszero\}}{\bsell\cdot\bsz\equiv_n 0}} \frac{1}{r(\bsh+\bsell)}\bigg)
  \\
  &\,\le\, \|f\|_d^2 \;
  \sum_{\satop{\bsell\in\bbZ^d\setminus\{\bszero\}}{\bsell\cdot\bsz\equiv_n 0}} \frac{1}{r(\bsh+\bsell)},
\end{align*}
leading to
\begin{align*}
  \sum_{\bsh\in\calA_d(M)} \big|\hat{f}_\bsh - \hat{f}_\bsh^a\big|^2
  &\,\le\, \|f\|_d^2 \; E_d(\bsz),
  \quad\mbox{with}\quad
  E_d(\bsz)\,:=\, \sum_{\bsh\in\calA_d(M)} \sum_{\satop{\bsell\in\bbZ^d\setminus\{\bszero\}}{\bsell\cdot\bsz\equiv_n 0}}
  \frac{1}{r(\bsh+\bsell)}.
\end{align*}
Combining these bounds yields the \emph{worst case $L_2$ approximation}
error bound
\begin{align} \label{eq:bal}
  e^{\rm wor\mbox{-}app}_{n,d,M}(\bsz) \,:=\, \sup_{f\in H_d,\,\|f\|_d\le 1} \|f - A(f)\|_{L_2}
  \,\le\, \bigg(\frac{1}{M} + E_d(\bsz)\bigg)^{1/2}.
\end{align}
More precisely, it was proved in \cite{KSW06} that $e^{\rm
wor\mbox{-}app}_{n,d,M}(\bsz) \le 1/M + \varrho(\calT_\bsz)$, where
$\varrho(\calT_\bsz)$ denotes the spectral radius of some matrix
$\calT_\bsz$ depending on the generating vector~$\bsz$. The $1/M$ term
arose from the truncation to the finite index set, while the spectral
radius arose from the cubature approximations of the remaining
coefficients. Though the elements of the matrix $\calT_\bsz$ were known
explicitly, there was no simple expression for the spectral radius and
therefore it was upper bounded by its trace, leading to the quantity
$E_d(\bsz)$. The initial approximation error is given by $e^{\rm
wor\mbox{-}app}_{0,d} := \sup_{f\in H_d,\,\|f\|_d\le 1} \|f\|_{L_2} =
\max_{\setu\subseteq\{1:d\}} \gamma_\setu^{1/2}$.

It is worth noting that $n$ needs to be large enough in relation to $M$.
For example, we must have $(n-1)^\alpha / \gamma _{\{1\}} >M$, since
otherwise $\bsh^* := (n-1,0,\ldots,0)$ belongs to $\calA_d(M)$ because
$r(\bsh^*) = (n-1)^\alpha / \gamma_{\{1\}}$, as a result of which the sum
over $\bsh$ and $\bsell$ in $E_d(\bsz)$ contains a pair
$(\bsh^*,\bsell^*)$ with $\bsell^* := (-n,0,\cdots,0)$ which contributes
the value of $\gamma_{\{1\}}$ in $E_d(\bsz)$, leading to the sum not
converging to zero as $n \to \infty$. This is ensured below by the
condition $n \ge \kappa M^{1/\alpha}$.

For product weights it is proved in \cite{KSW06,KSW08} that for $n$ prime
a generating vector $\bsz$ can be obtained by a CBC construction to
achieve
\begin{align*}
  E_d(\bsz) \,\le\, \bigg(\frac{1}{\mu^\lambda} \frac{|\calA_d(M)|}{n-1}
  \prod_{j=1}^d \Big( \big(1 + 2(1+\mu^\lambda)\big)\,\zeta(\alpha\lambda)\gamma_j^\lambda\Big) \bigg)^{1/\lambda}
  \quad\mbox{for all } \lambda\in (\tfrac{1}{\alpha},1],
\end{align*}
and for all $\mu\in (0,(1-1/\kappa)^\alpha]$ where $\kappa>1$ is such that
$n\ge \kappa M^{1/\alpha}$. Combining this with the bounds on the
cardinality of the index set \cite{KSW06,KSW08}
\begin{align*}
 (\gamma_1 M)^{1/\alpha} \,\le\,
 |\calA_d(M)| \,\le\, M^q \prod_{j=1}^d \big(1 + 2\zeta(\alpha q)\gamma_j^q\big)
  \quad\mbox{for all } q > \tfrac{1}{\alpha},
\end{align*}
we balance the two terms in \eqref{eq:bal} by taking
\begin{align*}
  \frac{1}{M} \,=\, \frac{M^{q/\lambda}}{n^{1/\lambda}}
  \quad\mbox{and}\quad q = \lambda\in (\tfrac{1}{\alpha},1],
\end{align*}
to obtain the convergence rate $e^{\rm wor\mbox{-}app}_{n,d,M}(\bsz) =
\calO(n^{-1/(4\lambda)}) = \calO(n^{-\alpha/4+\delta})$, $\delta>0$, with
$\lambda = 1/(\alpha-4\delta)$, where the implied constant is independent
of $d$ provided that $\sum_{j\ge 1} \gamma_j^\lambda < \infty$. The cost
of the fast CBC algorithm based on $E_d(\bsz)$ with product weights  is
$\calO(|\calA_d(M)|\,d\,n\log(n))$ operations.

Alternatively, since $r(\bsh)\le M$ for $\bsh\in\calA_d(M)$, we can bound
$E_d(\bsz)$ by
\begin{align} \label{eq:Sd}
  E_d(\bsz)
  &\,\le\, \sum_{\bsh\in\calA_d(M)} \frac{M}{r(\bsh)}
  \sum_{\satop{\bsell\in\bbZ^d\setminus\{\bszero\}}{\bsell\cdot\bsz\equiv_n 0}} \frac{1}{r(\bsh+\bsell)} \nonumber\\
  &\,\le\, M\,S_d(\bsz),
  \quad\mbox{with}\quad
  S_d(\bsz)\,:=\, \sum_{\bsh\in\bbZ^d} \frac{1}{r(\bsh)}
  \sum_{\satop{\bsell\in\bbZ^d\setminus\{\bszero\}}{\bsell\cdot\bsz\equiv_n 0}} \frac{1}{r(\bsh+\bsell)}.
\end{align}
A variant of the quantity $S_d(\bsz)$ first appeared in the context of a
Lattice-Nystr\"om method for Fredholm integral equations of the second
kind \cite{DKKS07}. (Note, however, that in \cite{DKKS07} the quantity was
defined as the square root of the double sum.) The advantage of working
with $S_d(\bsz)$ instead of $E_d(\bsz)$ is that \emph{there is no
dependence on the index set $\calA_d(M)$, thus the error analysis is
simpler and the construction cost is lower.}

For product weights it is proved in \cite{DKKS07} that for $n$ prime a
generating vector $\bsz$ can be obtained by a CBC construction to achieve
\begin{align} \label{eq:Sd-cbc-prod}
  S_d(\bsz) \,\le\, \bigg(\frac{1}{\mu^{2\lambda}} \frac{1}{n}
  \prod_{j=1}^d \Big( \big(1 + 2(1+\mu^\lambda)^{1/2}\big)\,\zeta(\alpha\lambda)\gamma_j^\lambda\Big)^2 \bigg)^{1/\lambda}
  \quad\mbox{for all } \lambda\in (\tfrac{1}{\alpha},1],
\end{align}
and for all $\mu\in (0,2^{-3\alpha}]$. Now we balance the two terms in
\eqref{eq:bal} by taking
\begin{align} \label{eq:chooseM}
  \frac{1}{M} \,=\, \frac{M}{n^{1/\lambda}}
  \quad\mbox{and}\quad \lambda\in (\tfrac{1}{\alpha},1],
\end{align}
to obtain again the convergence rate $e^{\rm wor\mbox{-}app}_{n,d,M}(\bsz)
= \calO(n^{-1/(4\lambda)}) = \calO(n^{-\alpha/4+\delta})$, $\delta>0$,
with $\lambda = 1/(\alpha-4\delta)$, where the implied constant is
independent of $d$ provided that $\sum_{j\ge 1} \gamma_j^\lambda <
\infty$. The cost of the fast CBC algorithm based on $S_d(\bsz)$ with
product weights is only $\calO(d\,n\log(n))$ operations.

The goal of this paper is to obtain an analogous error bound for
$S_d(\bsz)$ with general weights. This is our Theorem~\ref{thm:main}
below.

Note that one can of course apply lattices that are designed for
integration with general weights directly for function approximation, but
as shown in \cite{KSW06} this will lead to a worse convergence rate
compared to designing lattice algorithms specifically for the purpose of
approximation.

\subsection{Other related results}

In this paper we consider $L_2$ approximation in the worst case setting.
Instead of measuring the error in the $L_2$ norm, one can also consider
other $L_p$ norms, including the $L_\infty$ norm \cite{BKUV17}. Instead of
the worst case setting, one can also consider the average case setting
where the function space is equipped with a Gaussian probability measure
with a prescribed mean and covariance function \cite{KSW08}. As already
mentioned earlier, instead of attempting to reduce the error criterion
directly, one can look for \emph{reconstruction lattices} which exactly
reproduce functions whose Fourier series are solely supported on a finite
index set \cite{Kam13,KPV15,PV16}. Instead of the periodic setting, one
can also consider the related \emph{cosine space} or \emph{Chebyshev
space} of nonperiodic functions \cite{SNC16,CKNS16,PV15,KMNN}. One can
also consider lattice algorithms in the context of discrete least square
approximation \cite{KMNN}.

Also related are \emph{spline algorithms} or \emph{kernel methods}
\cite{Wah90,ZLH06,ZKH09} and \emph{collocation} \cite{LH03,SNC16} based on
lattice points. In a reproducing kernel Hilbert space with a
``shift-invariant'' kernel (as we have in the periodic setting here), the
structure of the lattice points allows the required linear system to be
solved in $\calO(n\,\log(n))$ operations. Since splines have the smallest
worst case $L_2$ approximation error among all algorithms that make use of
the same sample points (see for example \cite{ZKH09}), the lattice
generating vectors from this paper can be used in a spline algorithm and
the worst case error bound from this paper will carry over as an immediate
upper bound with no further multiplying constant. The advantage of a
spline over the lattice algorithm \eqref{eq:Af} is that there is no
presence of the index set $\calA_d$ so is extremely efficient in practice.

The best possible rate of convergence for lattice algorithms for
approximation is proved recently in \cite{BKUV17} to be only \emph{half}
of the optimal rate of convergence for lattice rules for integration
(i.e., $\calO(n^{-\alpha/4+\delta})$ versus $\calO(n^{-\alpha/2+\delta})$,
$\delta>0$). This is a negative point for lattice algorithms, since there
are other approximation algorithms such as Smolyak algorithms or sparse
grids which do not suffer from this loss of convergence rate. However, as
discussed in \cite{BKUV17}, lattice algorithms have their advantages in
terms of simplicity in construction and point generation, and stability
and efficiency in application, making them still attractive and
competitive despite the reduced convergence rate.

\section{New results on approximation with general weights} \label{sec:cbc}

\subsection{Size of the index set}

Although the index set $\calA_d(M)$ does not appear in the expression for
$S_d(\bsz)$ in \eqref{eq:Sd}, it does impact the lattice algorithm $A(f)$
defined in \eqref{eq:Af}. Here we provide a bound on its cardinality.

\begin{lemma} \label{lem:AdM}
For all $d\ge 1$, $M>0$ and $q> 1/\alpha$ we have
\begin{align*}
  |\calA_d(M)| \,\le\, M^q \sum_{\setu\subseteq\{1:d\}} [2\zeta(\alpha q)]^{|\setu|}\, \gamma_{\setu}^q.
\end{align*}
\end{lemma}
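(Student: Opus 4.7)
The plan is to decompose $\calA_d(M)$ according to the support of $\bsh$, and then on each support piece apply the classical ``smoothing'' trick of multiplying each indicator by a suitable power of $r(\bsh)^{-1}$ so that the sum becomes a tractable Dirichlet-type series.

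More precisely, I would first write
\[
  |\calA_d(M)| \,=\, \sum_{\setu\subseteq\{1:d\}}
  \big|\{\bsh\in\bbZ^d : \supp(\bsh)=\setu,\; r(\bsh)\le M\}\big|,
\]
and then for each fixed $\setu$ note that the condition $r(\bsh)\le M$ is exactly $\prod_{j\in\setu}|h_j|^\alpha \le M\gamma_\setu$. On the indices satisfying this, the quantity $M\gamma_\setu/\prod_{j\in\setu}|h_j|^\alpha$ is at least $1$, so for any $q>0$ I may bound the indicator $1$ by the $q$-th power of this ratio. This is the main step and, once done, the constraint can be dropped.

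Having dropped the constraint, the inner sum factorizes over coordinates in $\setu$:
\[
  \sum_{\satop{\bsh\in\bbZ^d}{\supp(\bsh)=\setu}}
  \bigg(\frac{M\gamma_\setu}{\prod_{j\in\setu}|h_j|^\alpha}\bigg)^{\!q}
  \,=\, M^q\,\gamma_\setu^q\,\prod_{j\in\setu}\sum_{h\in\bbZ\setminus\{0\}}\frac{1}{|h|^{\alpha q}}
  \,=\, M^q\,\gamma_\setu^q\,[2\zeta(\alpha q)]^{|\setu|},
\]
where the one-dimensional sums converge precisely because $q>1/\alpha$, i.e.\ $\alpha q>1$. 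Summing these estimates over all $\setu\subseteq\{1:d\}$ yields the claimed bound; the $\setu=\emptyset$ term correctly contributes $M^q$, which upper-bounds the one possible point $\bsh=\bszero$ in $\calA_d(M)$ regardless of whether $M\ge 1$.

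There is no real obstacle here beyond choosing the right exponent: the only subtlety is that the ``$1\le(\cdot)^q$'' trick requires $q>0$, while the resulting zeta series requires $\alpha q>1$; both are covered by the single hypothesis $q>1/\alpha$. The argument is otherwise a direct computation that decouples across the coordinates inside each fixed support, and the result is uniform in $d$ in the sense that the dependence on $d$ is encoded entirely through the outer sum over subsets of $\{1:d\}$.
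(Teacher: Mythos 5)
Your proof is correct and follows essentially the same route as the paper: both decompose $\calA_d(M)$ by the support $\setu$ of $\bsh$ and reduce the problem to the bound $[2\zeta(\alpha q)]^{|\setu|}\,(\gamma_\setu M)^q$ on the number of vectors $\bsh_\setu\in(\bbZ\setminus\{0\})^{|\setu|}$ with $\prod_{j\in\setu}|h_j|^\alpha\le\gamma_\setu M$. The only (cosmetic) difference is that you obtain this inner estimate in one step by majorizing the indicator by $\big(M\gamma_\setu/\prod_{j\in\setu}|h_j|^\alpha\big)^q$ and factorizing the unconstrained sum over coordinates, whereas the paper reaches the same product of zeta factors by induction on the number of nonzero coordinates; your version is equally rigorous, including the handling of the $\setu=\emptyset$ term.
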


\begin{proof}
We can write
\begin{align*}
  |\calA_d(M)|
  &\,=\, \sum_{\bsh\in \calA_d(M)} 1
  \,=\, \sum_{\satop{\bsh\in\bbZ^d}{r(\bsh)\le M}} 1
  \,=\, \sum_{\setu\subseteq\{1:d\}}
        \sum_{\satop{\bsh\in\bbZ^d,\, \supp(\bsh)=\setu}{\gamma_{\supp(\bsh)}^{-1} \prod_{j\in \supp(\bsh)} |h_j|^\alpha\le M}} 1 \\
  &\,=\, \sum_{\setu\subseteq\{1:d\}}
        \sum_{\satop{\bsh_\setu\in (\bbZ\setminus\{0\})^{|\setu|}}{\prod_{j\in\setu} |h_j|^\alpha\le \gamma_\setu M}} 1
  \,=\, \sum_{\setu\subseteq\{1:d\}}
        \sum_{\bsh_\setu\in B_\setu(\gamma_\setu M)} 1
  \,=\, \sum_{\setu\subseteq\{1:d\}} |B_\setu(\gamma_\setu M)|,
\end{align*}
where we introduced the auxiliary set (treating $\gamma_\setu$ as part of
the argument $m$)
\begin{align*}
  B_\setu(m) := \bigg\{\bsh_\setu \in(\bbZ\setminus\{0\})^{|\setu|} : \prod_{j\in\setu} |h_j|^\alpha \le m\bigg\},
  \qquad \setu\subseteq\{1:d\}.
\end{align*}
The result holds if we can show that, for all $\setu\subseteq\{1:d\}$,
$m>0$ and $q> 1/\alpha$,
\begin{align*}
  |B_\setu(m)| \,\le\, [2\zeta(\alpha q)]^{|\setu|}\, m^q.
\end{align*}

Since the coordinates are equivalent, it suffices to consider the set
\begin{align*}
  \widetilde{B}_s(m) := \bigg\{\bsh \in(\bbZ\setminus\{0\})^s : \prod_{j=1}^s |h_j|^\alpha \le m\bigg\},
\end{align*}
and show that, for all $s\ge 0$, $m>0$ and $q> 1/\alpha$,
\begin{align} \label{eq:BdM}
  |\widetilde{B}_s(m)| \,\le\, [2\zeta(\alpha q)]^s\, m^q.
\end{align}
We have $|\widetilde{B}_0(m)| = 0$ and $|\widetilde{B}_1(m)| = 2\lfloor
m^{1/\alpha}\rfloor$, so \eqref{eq:BdM} holds trivially for $s=0,1$. For
$s\ge 2$ and assuming that \eqref{eq:BdM} holds with $s$ replaced by
$s-1$, we have
\begin{align*}
  |\widetilde{B}_s(m)|
  &\,=\,  \sum_{\satop{\bsh\in (\bbZ\setminus\{0\})^s}{\prod_{j=1}^s |h_j|^\alpha\le m}} 1
  \,=\,  \sum_{h_s\in\bbZ\setminus\{0\}}
  \sum_{\satop{(h_1,\ldots,h_{s-1})\in (\bbZ\setminus\{0\})^{s-1}}{\prod_{j=1}^{s-1} |h_j|^\alpha\le \frac{m}{|h_s|^\alpha}}} 1 \\
  &\,=\,  \sum_{h_s\in\bbZ\setminus\{0\}} \big|\widetilde{B}_{s-1}\big(\tfrac{m}{|h_s|^\alpha}\big)\big|
  \,\le\,  \sum_{h_s\in\bbZ\setminus\{0\}} [2\zeta(\alpha q)]^{s-1} \big(\tfrac{m}{|h_s|^\alpha}\big)^q
  \,=\,  [2\zeta(\alpha q)]^s\, m^q.
\end{align*}
Hence \eqref{eq:BdM} holds for all $s\ge 0$. This completes the proof.
\end{proof}

\subsection{Dimension-wise decomposition of the error criterion}

With product weights, the error criterion $S_d(\bsz)$ in \eqref{eq:Sd} can
be expressed recursively as
\begin{align*}
  S_d(\bsz)
  \,=\, (1+2\zeta(2\alpha)\gamma_d^2)\, S_{d-1}(z_1,\ldots,z_{d-1}) + \theta_d(\bsz),
\end{align*}
where $\theta_d$ is an expression which captures all the contribution of
the new component~$z_d$. The precise formula for $\theta_d$ in the case of
product weights does not matter here. The main point is that this
recursion provided the inductive step to prove the bound
\eqref{eq:Sd-cbc-prod} for the CBC construction of $\bsz$ based on
minimizing $S_s(z_1,\ldots,z_s)$ for each $s = 1,2,\ldots, d$. Moreover,
this means that the result for product weights is extensible in $d$.

The situation with general weights is quite different: the recursion turns
out to be rather complicated because ``future'' weights get tangled up! To
enable us to describe this complication, we introduce a temporary
notation, $S_d(\bsz) =
S_d(\bsz;\{\gamma_\setu\}_{\setu\subseteq\{1:d\}})$, to show its explicit
dependence on the weights $\{\gamma_\setu\}$. We show in the proof of the
following lemma that, with respect to any input sequence $\{\beta_\setu\}$
(``replaceable'' in every function call), we have
\begin{align} \label{eq:recur}
  S_d \big(\bsz;\{\beta_\setu\}_{\setu\subseteq\{1:d\}})
  &\,=\, S_{d-1} \big(z_1,\ldots,z_{d-1};\{\beta_{\setu}\}_{\setu\subseteq\{1:d-1\}}\big) \nonumber\\
  &\quad\; + 2\zeta(2\alpha)\,S_{d-1} \big(z_1,\ldots,z_{d-1};\{\beta_{\setu\cup\{d\}}\}_{\setu\subseteq\{1:d-1\}}\big) \nonumber\\
  &\quad\; + \theta_d \big(\bsz;\{\beta_{\setu}\}_{\setu\subseteq\{1:d\}}\big),
\end{align}
with $\theta_d$ defined as in \eqref{eq:theta} below. That is, the error
criterion in $d$ dimensions with input sequence $\{\beta_\setu\}$ depends
on the error criterion in $d-1$ dimensions with input sequence
$\{\beta_\setu\}$, as well as on the error criterion in $d-1$ dimensions
in which each parameter $\beta_\setu$ in the input sequence is
``replaced'' by a corresponding ``future'' parameter
$\beta_{\setu\cup\{d\}}$.

This dependence on ``future'' weights means that a CBC construction which
minimizes $S_s(z_1,\ldots,z_s)$ one dimension at a time cannot work here,
because there is no way to establish a valid induction argument! To
overcome this difficulty, we need to fix \textit{a priori} a value of $d$
to be the target final dimension, and then use the recursion
\eqref{eq:recur} to decompose
$S_d(\bsz;\{\gamma_\setu\}_{\setu\subseteq\{1:d\}})$ all the way down to
the first dimension, to yield a \emph{dimension-wise decomposition} of the
error criterion as shown below. A similar strategy has previously been
used in \cite{NSW17,ELN18}.

\begin{lemma}
Let $d\ge 1$ be fixed and a sequence of weights
$\{\gamma_\setu\}_{\setu\subseteq\{1:d\}}$ be given. We can write
\begin{align} \label{eq:Sd-decomp}
  S_d(\bsz)
  \,=\, \sum_{s=1}^d T_{d,s} \big(z_1,\ldots,z_s\big),
\end{align}
where, for each $s=1,2,\ldots,d$,
\begin{align} \label{eq:Tds}
  T_{d,s} \big(z_1,\ldots,z_s\big)
  \,:=\, \sum_{\setw\subseteq\{s+1:d\}}
  [2\zeta(2\alpha)]^{|\setw|}\, \theta_s \big(z_1,\ldots,z_s;\{\gamma_{\setu\cup\setw}\}_{\setu\subseteq\{1:s\}}\big),
\end{align}
\begin{align} \label{eq:theta}
  \theta_s \big(z_1,\ldots,z_s;\{\beta_\setu\}_{\setu\subseteq\{1:s\}}\big)
  \,:=\, \sum_{\bsh\in\bbZ^s} \sum_{\satop{\bsell\in\bbZ^s,\;\ell_s\ne 0}{\bsell\cdot (z_1,\ldots,z_s)\equiv_n 0}}
  \frac{\beta_{\supp(\bsh)}}{r'(\bsh)}
  \frac{\beta_{\supp(\bsh+\bsell)}}{r'(\bsh+\bsell)},
\end{align}
with $r'(\bsh) := \prod_{j\in\supp(\bsh)} |h_j|^\alpha$.
\end{lemma}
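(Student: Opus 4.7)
\emph{Plan of proof.} My strategy is to first establish the recursion \eqref{eq:recur} suggested in the text and then to unfold it, $d$ times, down to $S_0 = 0$, collecting the $\theta_s$ contributions with the correct combinatorial multiplicities.

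For the recursion, I would work with $S_d(\bsz;\{\beta_\setu\})$, the quantity obtained from \eqref{eq:Sd} by replacing each $\gamma_{\supp(\cdot)}$ by $\beta_{\supp(\cdot)}$, and split the inner sum over $\bsell \in \bbZ^d \setminus \{\bszero\}$ according to whether $\ell_d \ne 0$ or $\ell_d = 0$. The case $\ell_d \ne 0$ is, by \eqref{eq:theta}, exactly $\theta_d(\bsz;\{\beta_\setu\})$. In the case $\ell_d = 0$, the constraint $\bsell \cdot \bsz \equiv_n 0$ reduces to $\sum_{j=1}^{d-1} \ell_j z_j \equiv_n 0$ with the truncated vector nonzero, and I would further split according to $h_d$. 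The subcase $h_d = 0$ places neither $\supp(\bsh)$ nor $\supp(\bsh+\bsell)$ at $d$ and reproduces $S_{d-1}(z_1,\ldots,z_{d-1};\{\beta_\setu\}_{\setu\subseteq\{1:d-1\}})$. The subcase $h_d \ne 0$ places $d$ in both supports and lets the $h_d$-dependence factor out as $\sum_{h_d \ne 0} |h_d|^{-2\alpha} = 2\zeta(2\alpha)$, leaving $S_{d-1}(z_1,\ldots,z_{d-1};\{\beta_{\setu\cup\{d\}}\}_{\setu\subseteq\{1:d-1\}})$. Adding the three pieces yields \eqref{eq:recur}; the hypothesis $\alpha > 1$ ensures absolute convergence and justifies all rearrangements.

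With the recursion in hand, I would iterate it as a binary branching process: at each level $k$, a term $S_k(\cdot;\{\beta_\setu\})$ splits into $S_{k-1}(\cdot;\{\beta_\setu\})$, plus $2\zeta(2\alpha)\cdot S_{k-1}(\cdot;\{\beta_{\setu\cup\{k\}}\})$, plus the shed $\theta_k(\cdot;\{\beta_\setu\})$. Starting from $\beta_\setu = \gamma_\setu$ at level $d$, after $d - s$ branchings each surviving $S_s$ term is indexed by a subset $\setw \subseteq \{s+1:d\}$ recording which of the eliminated indices were absorbed into the weight shift; its input is $\{\gamma_{\setu\cup\setw}\}_{\setu\subseteq\{1:s\}}$ and its multiplicity is $[2\zeta(2\alpha)]^{|\setw|}$. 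The $\theta_s$ terms shed while passing through level $s$ inherit the same tag $\setw$ and the same factor, giving $2^{d-s}$ contributions of the required form. Continuing to $s = 0$, where $S_0 = 0$ because the inner sum over $\bbZ^0\setminus\{\bszero\}$ is empty, leaves precisely $\sum_{s=1}^{d}\sum_{\setw\subseteq\{s+1:d\}}[2\zeta(2\alpha)]^{|\setw|}\,\theta_s(z_1,\ldots,z_s;\{\gamma_{\setu\cup\setw}\}_{\setu\subseteq\{1:s\}})$, which is the claimed decomposition \eqref{eq:Sd-decomp}--\eqref{eq:Tds}.

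The main obstacle I anticipate is the bookkeeping of the input-weight sequences under iteration: a shift by $\{k_1\}$ followed by a shift by $\{k_2\}$ must compose to a shift by $\{k_1,k_2\}$, so that the subsets $\setw$ arise cleanly as disjoint unions of the singletons accumulated along each branch. Carrying out the recursion \eqref{eq:recur} for an \emph{abstract} input sequence $\{\beta_\setu\}$ rather than for the fixed weights $\{\gamma_\setu\}$ is the key device that makes this composition transparent and yields the clean parametrisation by $\setw \subseteq \{s+1:d\}$ appearing in \eqref{eq:Tds}.
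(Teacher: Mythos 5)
Your proposal is correct and follows essentially the same route as the paper: the paper likewise generalizes $S_s$ to an abstract input sequence $\{\beta_\setu\}$, splits the double sum into the three cases $h_s=\ell_s=0$, $h_s\ne 0$ with $\ell_s=0$, and $\ell_s\ne 0$ (the same partition you obtain by splitting first on $\ell_d$ and then on $h_d$), and then unrolls the resulting recursion down to $S_0=0$, tracking the accumulated index sets $\setw$ and the multiplicities $[2\zeta(2\alpha)]^{|\setw|}$ exactly as in your binary-branching bookkeeping. No substantive differences.
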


\begin{proof}
We remark that the quantity $r'(\bsh)$ is essentially $r(\bsh)$ without
the weight parameter. We generalize the definition in \eqref{eq:Sd}: for
each $s=1,2,\ldots$ and any input sequence
$\{\beta_\setu\}_{\setu\subseteq\{1:s\}}$, we define
\begin{align*}
  S_s \big(z_1,\ldots,z_s; \{\beta_\setu\}_{\setu\subseteq\{1:s\}}\big)
  \,:=\, \sum_{\bsh\in\bbZ^s} \sum_{\satop{\bsell\in\bbZ^s\setminus\{\bszero\}}{\bsell\cdot (z_1,\ldots,z_s)\equiv_n 0}}
  \frac{\beta_{\supp(\bsh)}}{r'(\bsh)}
  \frac{\beta_{\supp(\bsh+\bsell)}}{r'(\bsh+\bsell)},
\end{align*}
so that $S_d \big(z_1,\ldots,z_d;
\{\gamma_\setu\}_{\setu\subseteq\{1:d\}}\big)$ agrees with \eqref{eq:Sd}.
We define additionally $S_0 := 0$. By separating the cases (i) $h_s=\ell_s
= 0$, (ii) $h_s\ne 0$ and $\ell_s=0$, (iii) $\ell_s\ne 0$, we obtain
\begin{align*}
  &S_s \big(z_1,\ldots,z_s; \{\beta_\setu\}_{\setu\subseteq\{1:s\}}\big) \\
  &\qquad\,=\, \sum_{\bsh\in\bbZ^{s-1}} \sum_{\satop{\bsell\in\bbZ^{s-1}\setminus\{\bszero\}}{\bsell\cdot (z_1,\ldots,z_{s-1})\equiv_n 0}}
  \frac{\beta_{\supp(\bsh)}}{r'(\bsh)}
  \frac{\beta_{\supp(\bsh+\bsell)}}{r'(\bsh+\bsell)} \\
  &\qquad\qquad + \sum_{h_s\in\bbZ\setminus\{0\}} \frac{1}{|h_s|^{2\alpha}}
  \sum_{\bsh\in\bbZ^{s-1}} \sum_{\satop{\bsell\in\bbZ^{s-1}\setminus\{\bszero\}}{\bsell\cdot (z_1,\ldots,z_{s-1})\equiv_n 0}}
  \frac{\beta_{\supp(\bsh)\cup\{s\}}}{r'(\bsh)}
  \frac{\beta_{\supp(\bsh+\bsell)\cup\{s\}}}{r'(\bsh+\bsell)} \\
  &\qquad\qquad + \theta_s \big(z_1,\ldots,z_s;\{\beta_\setu\}_{\setu\subseteq\{1:s\}}\big) \\
  &\qquad\,=\, S_{s-1}\big(z_1,\ldots,z_{s-1};\{\beta_{\setu}\}_{\setu\subseteq\{1:s-1\}}\big) \\
  &\qquad\qquad + 2\zeta(2\alpha)\,S_{s-1}\big(z_1,\ldots,z_{s-1};\{\beta_{\setu\cup\{s\}}\}_{\setu\subseteq\{1:s-1\}}\big)
  + \theta_s\big(z_1,\ldots,z_s;\{\beta_{\setu}\}_{\setu\subseteq\{1:s\}}\big),
\end{align*}
where $\theta_s$ is as defined in \eqref{eq:theta}. This proves
\eqref{eq:recur} by taking $s=d$.

Abbreviating temporarily the above recursion by
\begin{align*}
 S_s \big(\{\beta_\setu\}_\setu\big)
 \,=\, S_{s-1} \big(\{\beta_{\setu}\}_\setu\big) + c\,S_{s-1} \big(\{\beta_{\setu\cup\{s\}}\}_\setu\big)
 + \theta_s\big(\{\beta_\setu\}_\setu\big),
\end{align*}
with $c := 2\zeta(2\alpha)$, we can write
\begin{align*}
  &S_{d}\big(\{\gamma_\setu\}_\setu\big)
  = S_{d-1}\big(\{\gamma_\setu\}_\setu\big) + c\,S_{d-1}\big(\{\gamma_{\setu\cup\{d\}}\}_\setu\big) + \theta_{d}(\{\gamma_\setu\}_\setu\big) \\
  &\qquad= S_{d-2}\big(\{\gamma_\setu\}_\setu\big) + c\,S_{d-2}\big(\{\gamma_{\setu\cup\{d-1\}}\}_\setu\big) + \theta_{d-1}(\{\gamma_\setu\}_\setu\big) \\
  &\qquad\quad
  + c\,S_{d-2}\big(\{\gamma_{\setu\cup\{d\}}\}_\setu\big) + c^2\,S_{d-2}\big(\{\gamma_{\setu\cup\{d-1,d\}}\}_\setu\big) + c\,\theta_{d-1}(\{\gamma_{\setu\cup\{d\}}\}_\setu\big) \\
  &\qquad\quad + \theta_{d}(\{\gamma_\setu\}_\setu\big) \\
  &\qquad= S_{d-3}\big(\{\gamma_\setu\}_\setu\big) + c\,S_{d-3}\big(\{\gamma_{\setu\cup\{d-2\}}\}_\setu\big) + \theta_{d-2}\big(\{\gamma_\setu\}_\setu\big)\\
  &\qquad\quad + c\,S_{d-3}\big(\{\gamma_{\setu\cup\{d-1\}}\}_\setu\big) + c^2\,S_{d-3}\big(\{\gamma_{\setu\cup\{d-2,d-1\}}\}_\setu\big) + c\,\theta_{d-2}\big(\{\gamma_{\setu\cup\{d-1\}}\}_\setu\big) \\
  &\qquad\quad + \theta_{d-1}(\{\gamma_\setu\}_\setu\big) \\
  &\qquad\quad + c\,S_{d-3}\big(\{\gamma_{\setu\cup\{d\}}\}_\setu\big) + c^2\,S_{d-3}\big(\{\gamma_{\setu\cup\{d-2,d\}}\}_\setu\big) + c\,\theta_{d-2}\big(\{\gamma_{\setu\cup\{d\}}\}_\setu\big) \\
  &\qquad\quad + c^2S_{d-3}\big(\{\gamma_{\setu\cup\{d-1,d\}}\}_\setu\big) + c^3\,S_{d-3}\big(\{\gamma_{\setu\cup\{d-2,d-1,d\}}\}_\setu\big) + c^2\theta_{d-2}\big(\{\gamma_{\setu\cup\{d-1,d\}}\}_\setu\big) \\
  &\qquad\quad + c\,\theta_{d-1}(\{\gamma_{\setu\cup\{d\}}\}_\setu\big) \\
  &\qquad\quad + \theta_{d}(\{\gamma_\setu\}_\setu\big).
\end{align*}
Continuing this way to decompose the terms until we reach $S_0 =0$, we
eventually end up with the expression in the lemma.
\end{proof}

\subsection{Component-by-component construction}

Algorithm~\ref{alg} below outlines a CBC construction for the generating
vector $\bsz$. Lemma~\ref{lem:avg} provides the essential averaging
argument needed in the proof of Theorem~\ref{thm:main}. The main result,
Theorem~\ref{thm:final}, is that we achieve the best possible convergence
rate for lattice algorithms as proven in~\cite{BKUV17}.

\begin{algorithm} \label{alg}
Given $n\ge 2$, a fixed $d\ge 1$, and a sequence of weights
$\{\gamma_\setu\}_{\setu\subseteq\{1:d\}}$, the generating vector $\bsz^*
= (z_1^*,\ldots,z_d^*)$ is constructed as follows: for each $s = 1,
\ldots,d$, with $z_1^*,\ldots,z_{s-1}^*$ fixed, choose $z_s \in
\{1,\ldots,n-1\}$ to minimize $T_{d,s}
\big(z_1^*,\ldots,z_{s-1}^*,z_s\big)$ given by \eqref{eq:Tds}.
\end{algorithm}

\begin{lemma} \label{lem:avg}
Let $n$ be prime. For any $s\ge 1$, any input sequence
$\{\beta_\setu\}_{\setu\subseteq\{1:s\}}$, all values of
$z_1,\ldots,z_{s-1}$, and all $\lambda\in (\frac{1}{\alpha},1]$, we have
\begin{align} \label{eq:theta-bound}
  &\frac{1}{n-1} \sum_{z_s=1}^{n-1}
  \big[\theta_s \big(z_1,\ldots,z_{s-1},z_s;\{\beta_\setu\}_{\setu\subseteq\{1:s\}}\big)\big]^\lambda \nonumber\\
  &\qquad\,\le\, \frac{\tau}{n}
  \bigg(\sum_{s\in\setu\subseteq\{1:s\}} \beta_{\setu}^\lambda\, [2\zeta(\alpha\lambda)]^{|\setu|}\bigg)
  \bigg(
  \sum_{\setu\subseteq\{1:s\}} \beta_\setu^\lambda\, [2\zeta(\alpha\lambda)]^{|\setu|} \bigg),
\end{align}
where $\tau := \max(6,2.5+2^{2\alpha\lambda+1})$.
\end{lemma}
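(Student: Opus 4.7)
The plan is as follows. First, apply Jensen's inequality ($(\sum_i a_i)^\lambda \le \sum_i a_i^\lambda$ for $\lambda\in(0,1]$ and $a_i\ge 0$) to pull the exponent inside the double sum defining $\theta_s$:
\[
  [\theta_s(z_1,\ldots,z_s;\{\beta_\setu\})]^\lambda \;\le\; \sum_{\bsh\in\bbZ^s} \sum_{\satop{\bsell\in\bbZ^s,\;\ell_s\ne 0}{\bsell\cdot\bsz\equiv_n 0}} \frac{\beta_{\supp(\bsh)}^\lambda\, \beta_{\supp(\bsh+\bsell)}^\lambda}{r'(\bsh)^\lambda\, r'(\bsh+\bsell)^\lambda}.
\]
Then interchange the average over $z_s\in\{1,\ldots,n-1\}$ with the double sum. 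Since $n$ is prime and $\ell_s\ne 0$, for each fixed $\bsh,\bsell$ the congruence $\bsell\cdot\bsz\equiv_n 0$ either determines $z_s$ uniquely modulo $n$ (when $n \nmid \ell_s$, giving at most one admissible value) or is independent of $z_s$ (when $n \mid \ell_s$, giving either all $n-1$ values or none). This cleanly splits the averaged left-hand side into two pieces, which I call $A/(n-1)$ and $B$.

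For $A$, substitute $\bsk:=\bsh+\bsell$ so the constraint becomes $k_s\ne h_s$, and use the elementary inequality $\mathbf{1}[k_s\ne h_s]\le \mathbf{1}[h_s\ne 0]+\mathbf{1}[k_s\ne 0]$ together with coordinate-wise factorisation. Each resulting term is a product of one ``restricted'' sum (with $s$ in the support of $\bsh$, respectively $\bsk$) and one unrestricted sum, which respectively match the two factors on the right-hand side; call these $P_1$ and $P_2$. This gives $A\le 2 P_1 P_2$, and combined with $1/(n-1)\le 2/n$ (valid for $n\ge 2$) bounds the $A$-piece by $4 P_1 P_2/n$.

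For $B$, drop the $z_1,\ldots,z_{s-1}$-dependent constraint $\sum_{j<s}\ell_j z_j\equiv_n 0$ as an upper bound, write $\ell_s = nm$ with $m\ne 0$, and split into three sub-cases depending on whether $h_s$ and $h_s+\ell_s$ are zero. The two ``boundary'' sub-cases $(h_s=0,\, h_s+\ell_s\ne 0)$ and $(h_s\ne 0,\, h_s+\ell_s=0)$ factorise cleanly: the $s$-th coordinate contributes $\sum_{m\ne 0} |nm|^{-\alpha\lambda} = 2\zeta(\alpha\lambda)/n^{\alpha\lambda}$, and the remaining coordinates reassemble into $P_1 P_2$. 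Using $\alpha\lambda>1$ so that $1/n^{\alpha\lambda}\le 1/n$, each sub-case contributes at most a constant times $P_1 P_2/n$.

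The main obstacle is the third sub-case, where both $h_s$ and $h_s+nm$ are nonzero; here the $s$-th coordinate reduces to the coupled sum
\[
  \Sigma \;:=\; \sum_{h_s\ne 0}\sum_{\satop{m\ne 0}{h_s+nm\ne 0}} \frac{1}{|h_s|^{\alpha\lambda}\,|h_s+nm|^{\alpha\lambda}}.
\]
The key observation is that $|nm|\ge n$ forces $\max(|h_s|,|h_s+nm|)\ge |nm|/2$, so I split the inner sum according to whether $|h_s|\le |nm|/2$ (so that by the reverse triangle inequality $|h_s+nm|\ge |nm|/2$) or $|h_s|> |nm|/2$. In either regime, one factor can be extracted as $(|nm|/2)^{-\alpha\lambda}$ and the remaining factor contributes at most $2\zeta(\alpha\lambda)$ (directly in the first regime, and after the substitution $h_s'=h_s+nm$ in the second). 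Summing over $m\ne 0$ then introduces a further $2\zeta(\alpha\lambda)/n^{\alpha\lambda}$, yielding a bound of the form $\Sigma\le C\zeta(\alpha\lambda)^2/n^{\alpha\lambda}$ with an explicit constant. Factoring out the $s$-th coordinate and combining with the sum over the remaining coordinates gives a contribution of the form $P_1 P_2/n$ (times a constant). Summing the four contributions from $A/(n-1)$ and the three sub-cases of $B$, and absorbing all numerical constants into $\tau$, yields the claimed bound.
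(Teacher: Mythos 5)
Your argument is correct and its skeleton coincides with the paper's: Jensen's inequality, averaging over $z_s$ using the primality of $n$ to separate the terms with $n\nmid\ell_s$ (which acquire the factor $1/(n-1)$, since the congruence then pins down $z_s$) from those with $n\mid\ell_s$ (which survive the averaging unchanged), then discarding the remaining congruence constraints and factorizing coordinate-by-coordinate into the sums $P_1$ and $P_2$. The genuine difference lies in the hardest piece, the coupled sum over $\ell_s=nm\ne 0$ with both $h_s$ and $h_s+nm$ nonzero. The paper partitions the $n\mid\ell_s$ terms by the residue of $h_s$ modulo $n$ (its $W_2$ and $W_3$), writes $h_s=pn+k$, recognizes the double sum as the square of the one-dimensional aliasing sum $\sum_{p}|pn+k|^{-\alpha\lambda}$ minus its diagonal, and estimates the off-center contribution via $|1+k/(pn)|\ge 1/2$. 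You instead partition by exact zeros of $h_s$ and $h_s+nm$ and handle the coupled case by noting that $|h_s|+|h_s+nm|\ge |nm|$ forces one of the two factors to be at most $(|nm|/2)^{-\alpha\lambda}$, summing the other factor freely; this is more elementary, avoids the modular bookkeeping entirely, and in fact yields the slightly smaller total constant $6+2^{\alpha\lambda+1}$, which one checks is at most $\max(6,\,2.5+2^{2\alpha\lambda+1})$ for $\alpha\lambda>1$, so your route does deliver the lemma with the stated $\tau$. Likewise, your bound $A\le 2P_1P_2$ for the $n\nmid\ell_s$ part via $\mathbf{1}[k_s\ne h_s]\le\mathbf{1}[h_s\ne 0]+\mathbf{1}[k_s\ne 0]$ is a mild streamlining of the paper's explicit three-case evaluation of $G(h_s,\ell_s)$; both give the same product structure $\scrP\scrQ$, $\scrQ^2$ that reassembles into the two factors of the right-hand side.
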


\begin{proof}
We have from the formula \eqref{eq:theta} and Jensen's inequality $\sum_k
a_k\le (\sum_k a_k^\lambda)^{1/\lambda}$ for all $a_k\ge 0$ that
\begin{align} \label{eq:2terms}
  {\rm Avg} \,&:=\, \frac{1}{n-1} \sum_{z_s=1}^{n-1}
  \big[\theta_s \big(z_1,\ldots,z_{s-1},z_s;\{\beta_\setu\}_{\setu\subseteq\{1:s\}}\big)\big]^\lambda \nonumber\\
  &\,\le\, \frac{1}{n-1} \sum_{z_s=1}^{n-1}
  \sum_{\bsh\in\bbZ^s} \sum_{\satop{\bsell\in\bbZ^s,\;\ell_s\ne 0}{(\ell_1,\ldots,\ell_{s-1})\cdot (z_1,\ldots,z_{s-1})\equiv_n -\ell_sz_s}}
  \bigg(\frac{\beta_{\supp(\bsh)}}{r'(\bsh)}
  \frac{\beta_{\supp(\bsh+\bsell)}}{r'(\bsh+\bsell)} \bigg)^\lambda \nonumber\\
  &\,=\, \frac{1}{n-1}
  \sum_{\bsh\in\bbZ^s} \sum_{\satop{\bsell\in\bbZ^s,\;\ell_s\ne 0,\;\ell_s\not\equiv_n 0}{(\ell_1,\ldots,\ell_{s-1})\cdot (z_1,\ldots,z_{s-1})\not\equiv_n 0}}
  \bigg(\frac{\beta_{\supp(\bsh)}}{r'(\bsh)}
  \frac{\beta_{\supp(\bsh+\bsell)}}{r'(\bsh+\bsell)} \bigg)^\lambda \nonumber\\
  &\qquad +
  \sum_{\bsh\in\bbZ^s} \sum_{\satop{\bsell\in\bbZ^s,\;\ell_s\ne 0,\;\ell_s\equiv_n 0}{(\ell_1,\ldots,\ell_{s-1})\cdot (z_1,\ldots,z_{s-1})\equiv_n 0}}
  \bigg(\frac{\beta_{\supp(\bsh)}}{r'(\bsh)}
  \frac{\beta_{\supp(\bsh+\bsell)}}{r'(\bsh+\bsell)} \bigg)^\lambda,
\end{align}
where we separated the terms depending on whether or not $\ell_s$ is a
multiple of $n$. In particular, we used the fact that for $n$ prime and
$\ell_s\not\equiv_n 0$, the product $\ell_sz_s$ covers each number from
$1$ to $n-1$ in some order as $z_s$ runs from $1$ to $n-1$.

Next we obtain an upper bound by dropping the conditions on the dot
product in both terms in \eqref{eq:2terms} (thus dropping all dependence
on $z_1,\ldots,z_{s-1}$), and define
\begin{align*}
  G(h_s,\ell_s) \,:=\, \sum_{\bsh\in\bbZ^{s-1}} \sum_{\bsell\in\bbZ^{s-1}}
  \bigg(\frac{\beta_{\supp(\bsh,h_s)}}{r'(\bsh,h_s)}
  \frac{\beta_{\supp((\bsh,h_s)+(\bsell,\ell_s))}}{r'((\bsh,h_s)+(\bsell,\ell_s))} \bigg)^\lambda,
\end{align*}
so that
\begin{align*}
  {\rm Avg}
  &\,\le\,
  \frac{1}{n-1} \sum_{h_s\in \bbZ} \sum_{\satop{\ell_s\in\bbZ\setminus\{0\}}{\ell_s\not\equiv_n 0}} G(h_s,\ell_s)
  + \sum_{h_s\in \bbZ} \sum_{\satop{\ell_s\in\bbZ\setminus\{0\}}{\ell_s\equiv_n 0}} G(h_s,\ell_s) \\
  &\,\le\,
  \frac{1}{n-1} \underbrace{\sum_{h_s\in \bbZ} \sum_{\ell_s\in\bbZ\setminus\{0\}} G(h_s,\ell_s)}_{=:\,W_1}
  + \underbrace{\sum_{\satop{h_s\in \bbZ}{h_s\equiv_n 0}}
                \sum_{\satop{\ell_s\in\bbZ\setminus\{0\}}{\ell_s\equiv_n 0}}  G(h_s,\ell_s)}_{=:\, W_2}
  + \underbrace{\sum_{\satop{h_s\in \bbZ}{h_s\not\equiv_n 0}}
                \sum_{\satop{\ell_s\in\bbZ\setminus\{0\}}{\ell_s\equiv_n 0}}  G(h_s,\ell_s)}_{=:\, W_3},
\end{align*}
where we further upper bounded by dropping the condition
$\ell_s\not\equiv_n 0$ in the first term and then splitting the remaining
term into the cases $h_s\equiv_n 0$ and $h_s\not\equiv_n 0$.

For $\ell_s\ne 0$, with a relabeling of $\bsq = \bsh+\bsell$, it is
straightforward to show that
\begin{align*}
  G(h_s,\ell_s)
  &\,=\, \bigg(\sum_{\bsh\in \bbZ^{s-1}}  \frac{\beta_{\supp(\bsh,h_s)}^\lambda}{r'(\bsh,h_s)^\lambda} \bigg)
  \bigg(\sum_{\bsq\in\bbZ^{s-1}} \frac{\beta_{\supp(\bsq,h_s+\ell_s)}^\lambda}{r'(\bsq,h_s+\ell_s)^\lambda}\bigg) \\
  &\,=\,
  \begin{cases}
  \displaystyle\frac{1}{|\ell_s|^{\alpha\lambda}} \scrP\,\scrQ
  & \mbox{if } h_s= 0 \mbox{ and } \ell_s \ne 0, \\[4mm]
  \displaystyle\frac{1}{|h_s|^{\alpha\lambda}}\scrQ\,\scrP
  & \mbox{if } h_s\ne 0 \mbox{ and }  \ell_s = -h_s, \\[4mm]
  \displaystyle\frac{1}{|h_s|^{\alpha\lambda}} \frac{1}{|h_s+\ell_s|^{\alpha\lambda}} \scrQ^2
  & \mbox{if } h_s\ne 0 \mbox{ and }  \ell_s \ne -h_s,
  \end{cases}
\end{align*}
with the abbreviations
\begin{align*}
  \scrP \,:=\, \sum_{\setu\subseteq\{1:s-1\}} \beta_\setu^\lambda\, [2\zeta(\alpha\lambda)]^{|\setu|}
  \quad\mbox{and}\quad
  \scrQ \,:=\, \sum_{\setu\subseteq\{1:s-1\}} \beta_{\setu\cup\{s\}}^\lambda\, [2\zeta(\alpha\lambda)]^{|\setu|}.
\end{align*}
Thus
\begin{align*}
  W_1
   \,&:=\, \sum_{h\in \bbZ} \sum_{\ell\in\bbZ\setminus\{0\}} G(h,\ell) \\
  &\,=\, \sum_{\ell\in\bbZ\setminus\{0\}} \frac{\scrP\,\scrQ}{|\ell|^{\alpha\lambda}}
  + \sum_{h\in \bbZ\setminus\{0\}} \frac{\scrQ\,\scrP}{|h|^{\alpha\lambda}}
  + \sum_{h\in \bbZ\setminus\{0\}} \sum_{\ell\in\bbZ\setminus\{0,-h\}}
  \frac{\scrQ^2}{|h|^{\alpha\lambda}\,|h+\ell|^{\alpha\lambda}} \\
  &\,\le\, 2\zeta(\alpha\lambda)\, \scrP\,\scrQ
  + 2\zeta(\alpha\lambda)\,\scrQ\,\scrP
  + \sum_{h\in \bbZ\setminus\{0\}} \sum_{q\in\bbZ\setminus\{0\}}
    \frac{\scrQ^2}{|h|^{\alpha\lambda}\,|q|^{\alpha\lambda}} \\
  &\,=\, 2[2\zeta(\alpha\lambda)]\, \scrP\,\scrQ
  + [2\zeta(\alpha\lambda)]^2\,\scrQ^2,
\end{align*}
and similarly
\begin{align*}
   W_2
   \,&:=\, \sum_{\satop{h\in \bbZ}{h\equiv_n 0}}
                \sum_{\satop{\ell\in\bbZ\setminus\{0\}}{\ell\equiv_n 0}}  G(h,\ell) \\
   &\,=\, \sum_{\ell\in\bbZ\setminus\{0\}} \frac{\scrP\,\scrQ}{|\ell n|^{\alpha\lambda}}
  + \sum_{h\in \bbZ\setminus\{0\}} \frac{\scrQ\,\scrP}{|hn|^{\alpha\lambda}}
  + \sum_{h\in \bbZ\setminus\{0\}} \sum_{\ell\in\bbZ\setminus\{0,-h\}}
  \frac{\scrQ^2}{|hn|^{\alpha\lambda}\,|(h+\ell)n|^{\alpha\lambda}} \\
  &\,\le\, \frac{2\zeta(\alpha\lambda)\, \scrP\,\scrQ}{n^{\alpha\lambda}}
  + \frac{2\zeta(\alpha\lambda)\,\scrQ\,\scrP}{n^{\alpha\lambda}}
  + \sum_{h\in \bbZ\setminus\{0\}} \sum_{q\in\bbZ\setminus\{0\}}
    \frac{\scrQ^2}{|hn|^{\alpha\lambda}\,|qn|^{\alpha\lambda}} \\
  &\,\le\, \frac{2[2\zeta(\alpha\lambda)]\, \scrP\,\scrQ}{n^{\alpha\lambda}}
  + \frac{[2\zeta(\alpha\lambda)]^2\,\scrQ^2}{n^{2\alpha\lambda}}.
\end{align*}
Moreover, we have
\begin{align*}
   W_3
  \,:=\, \sum_{\satop{h\in \bbZ}{h\not\equiv_n 0}}
                \sum_{\satop{\ell\in\bbZ\setminus\{0\}}{\ell\equiv_n 0}} G(h,\ell)
  &\,=\, \sum_{\satop{h\in \bbZ\setminus\{0\}}{h\not\equiv_n 0}} \sum_{\ell\in\bbZ\setminus\{0\}}
  \frac{\scrQ^2}{|h|^{\alpha\lambda}\,|h+\ell n|^{\alpha\lambda}} \\
  &\,=\, \scrQ^2
  \sum_{\satop{h\in \bbZ\setminus\{0\}}{h\not\equiv_n 0}}
  \bigg(\frac{1}{|h|^{\alpha\lambda}}
  \sum_{\ell\in\bbZ} \frac{1}{|h+\ell n|^{\alpha\lambda}}
  - \frac{1}{|h|^{2\alpha\lambda}} \bigg),
\end{align*}
where we separated out the case $\ell = 0$. Writing $h = pn + k$ with $k$
being the remainder modulo $n$, we obtain
\begin{align*}
   W_3
  &\,=\, \scrQ^2
  \sum_{\satop{k=-(n-1)/2}{k\ne 0}}^{(n-1)/2}
  \sum_{p\in\bbZ} \bigg( \frac{1}{|pn+k|^{\alpha\lambda}}
  \sum_{\ell\in\bbZ} \frac{1}{|pn+k+\ell n|^{\alpha\lambda}}
  - \frac{1}{|pn+k|^{2\alpha\lambda}} \bigg)\\
  &\,=\, \scrQ^2
  \sum_{\satop{k=-(n-1)/2}{k\ne 0}}^{(n-1)/2}
  \sum_{p\in\bbZ} \bigg( \frac{1}{|pn+k|^{\alpha\lambda}}
  \sum_{q\in\bbZ} \frac{1}{|qn+k|^{\alpha\lambda}}
  - \frac{1}{|pn+k|^{2\alpha\lambda}} \bigg)\\
  &\,=\, \scrQ^2
  \sum_{\satop{k=-(n-1)/2}{k\ne 0}}^{(n-1)/2}
  \Bigg( \bigg(\sum_{p\in\bbZ} \frac{1}{|pn+k|^{\alpha\lambda}}\bigg)^2
  - \sum_{p\in\bbZ} \frac{1}{|pn+k|^{2\alpha\lambda}} \Bigg)\\
  &\,\le\, \scrQ^2
  \sum_{\satop{k=-(n-1)/2}{k\ne 0}}^{(n-1)/2}
  \Bigg( \bigg(\frac{1}{|k|^{\alpha\lambda}} + \sum_{p\in\bbZ\setminus\{0\}}
  \frac{1}{|pn|^{\alpha\lambda}|1+k/(pn)|^{\alpha\lambda}}\bigg)^2
  - \frac{1}{|k|^{2\alpha\lambda}}\Bigg).
\end{align*}
Now for $|k|\le (n-1)/2$ and $|p|\ge 1$, we have $|1 + k/(pn)|\ge 1/2$,
and so
\begin{align*}
   W_3
  &\,\le\, \scrQ^2
  \sum_{\satop{k=-(n-1)/2}{k\ne 0}}^{(n-1)/2}
  \Bigg( \bigg(\frac{1}{|k|^{\alpha\lambda}} + \sum_{p\in\bbZ\setminus\{0\}} \frac{1}{|pn|^{\alpha\lambda}(1/2)^{\alpha\lambda}}\bigg)^2
  - \frac{1}{|k|^{2\alpha\lambda}}\Bigg)\\
  &\,=\, \scrQ^2
  \sum_{\satop{k=-(n-1)/2}{k\ne 0}}^{(n-1)/2}
  \Bigg( \frac{2}{|k|^{\alpha\lambda}} \frac{2^{\alpha\lambda+1}\zeta(\alpha\lambda)}{n^{\alpha\lambda}}
  + \bigg(\frac{2^{\alpha\lambda+1}\zeta(\alpha\lambda)}{n^{\alpha\lambda}}\bigg)^2 \Bigg)\\
  &\,\le\, \scrQ^2
  \Bigg( 4\zeta(\alpha\lambda) \frac{2^{\alpha\lambda+1}\zeta(\alpha\lambda)}{n^{\alpha\lambda}}
  + (n-1)\bigg(\frac{2^{\alpha\lambda+1}\zeta(\alpha\lambda)}{n^{\alpha\lambda}}\bigg)^2 \Bigg)
  \,\le\, \frac{2^{2\alpha\lambda+1}\,[2\zeta(\alpha\lambda)]^2\,\scrQ^2}{n^{\alpha\lambda}},
\end{align*}
where we used $\alpha\lambda>1$.

Combining the bounds on $W_1$, $W_2$, $W_3$, and using $1/(n-1)\le 2/n$
and $1/n^{\alpha\lambda} \le 1/n$ and $1/n^{2\alpha\lambda} \le 1/(2n)$,
we obtain
\begin{align*}
  {\rm Avg}
  &\,\le\, \frac{2^2[2\zeta(\alpha\lambda)]\, \scrP\,\scrQ + 2[2\zeta(\alpha\lambda)]^2\,\scrQ^2}{n}
  + \frac{2[2\zeta(\alpha\lambda)]\, \scrP\,\scrQ + \frac{1}{2} [2\zeta(\alpha\lambda)]^2\,\scrQ^2}{n} \\
  &\qquad + \frac{2^{2\alpha\lambda+1}\,[2\zeta(\alpha\lambda)]^2\,\scrQ^2}{n} \\
  &\,=\, \frac{6[2\zeta(\alpha\lambda)]\,\scrP\,\scrQ + \tau_0 [2\zeta(\alpha\lambda)]^2\,\scrQ^2}{n},
  \qquad \tau_0 := 2.5+2^{2\alpha\lambda+1}.
\end{align*}
Writing $\tau := \max(6,\tau_0)$, we have
\begin{align*}
  {\rm Avg}
  &\,\le\, \frac{\tau}{n}
  \bigg(2\zeta(\alpha\lambda)\sum_{\setu\subseteq\{1:s-1\}} \beta_{\setu\cup\{s\}}^\lambda\, [2\zeta(\alpha\lambda)]^{|\setu|} \bigg) \\
  &\qquad\qquad \times
  \bigg(\sum_{\setu\subseteq\{1:s-1\}} \beta_{\setu}^\lambda\, [2\zeta(\alpha\lambda)]^{|\setu|}
  + 2\zeta(\alpha\lambda)\sum_{\setu\subseteq\{1:s-1\}} \beta_{\setu\cup\{s\}}^\lambda\, [2\zeta(\alpha\lambda)]^{|\setu|} \bigg) \\
  &\,=\, \frac{\tau}{n}
  \bigg(\sum_{s\in\setu\subseteq\{1:s\}} \beta_{\setu}^\lambda\, [2\zeta(\alpha\lambda)]^{|\setu|} \bigg)
  \bigg(\sum_{\setu\subseteq\{1:s-1\}} \beta_{\setu}^\lambda\, [2\zeta(\alpha\lambda)]^{|\setu|}
  + \sum_{s\in\setu\subseteq\{1:s\}} \beta_{\setu}^\lambda\, [2\zeta(\alpha\lambda)]^{|\setu|} \bigg) \\
  &\,=\, \frac{\tau}{n}
  \bigg(\sum_{s\in\setu\subseteq\{1:s\}} \beta_{\setu}^\lambda\, [2\zeta(\alpha\lambda)]^{|\setu|} \bigg)
  \bigg(\sum_{\setu\subseteq\{1:s\}} \beta_{\setu}^\lambda\, [2\zeta(\alpha\lambda)]^{|\setu|} \bigg).
\end{align*}
This completes the proof.
\end{proof}

\begin{theorem} \label{thm:main}
Let $n$ be prime. For fixed $d\ge 1$ and a given sequence of weights
$\{\gamma_\setu\}_{\setu\subseteq\{1:d\}}$, a generating vector $\bsz$
obtained from the CBC construction following Algorithm~\ref{alg} satisfies
for all $\lambda\in (\tfrac{1}{\alpha},1]$,
\begin{align} \label{eq:final}
  S_d(\bsz) \,\le\,
  \bigg[ \frac{\tau}{n} \bigg(
  \sum_{\emptyset\ne\setu\subseteq\{1:d\}} |\setu|\,\gamma_{\setu}^\lambda\, [2\zeta(\alpha\lambda)]^{|\setu|}\bigg)
  \bigg(\sum_{\setu\subseteq\{1:d\}} \gamma_\setu^\lambda\, [2\zeta(\alpha\lambda)]^{|\setu|}\bigg) \bigg]^{1/\lambda},
\end{align}
where $\tau := \max(6,2.5+2^{2\alpha\lambda+1})$. Furthermore, if the
weights are such that there exists a constant $\xi\ge 1$ $($which may
depend on $\lambda$$)$ such that
\begin{align} \label{eq:decay}
  \gamma_{\setu\cup\setw}^\lambda \le \xi\,\frac{\gamma_\setu^\lambda}{[2\zeta(\alpha\lambda)]^{|\setw|}}
  \quad\mbox{for all}\quad \setu\subseteq\{1:s\},\;  \setw\subseteq\{s+1:d\},\; s\ge 1,\; d\ge 1,
\end{align}
then \eqref{eq:final} holds with $\tau$ replaced by $\tau\,\xi$ and with
the $|\setu|$ factor inside the first sum replaced by $1$.
\end{theorem}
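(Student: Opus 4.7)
The plan is to combine the dimension-wise decomposition \eqref{eq:Sd-decomp} with a classical CBC averaging argument, invoking Lemma~\ref{lem:avg} one factor $\theta_s$ at a time. For the generating vector $\bsz^*$ produced by Algorithm~\ref{alg}, I would first apply the sub-additivity of $x\mapsto x^\lambda$ (valid for $\lambda\in(0,1]$) to get $[S_d(\bsz^*)]^\lambda \le \sum_{s=1}^d [T_{d,s}(z_1^*,\ldots,z_s^*)]^\lambda$. Since $z_s^*$ minimises $T_{d,s}$ (equivalently, $[T_{d,s}]^\lambda$) over $\{1,\ldots,n-1\}$, each summand is bounded by the average $\frac{1}{n-1}\sum_{z_s=1}^{n-1}[T_{d,s}(z_1^*,\ldots,z_{s-1}^*,z_s)]^\lambda$. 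A second use of sub-additivity on the $\setw$-sum in \eqref{eq:Tds}, together with the elementary bound $[2\zeta(2\alpha)]^{|\setw|\lambda} \le [2\zeta(\alpha\lambda)]^{|\setw|}$ (which holds because $\lambda\le 1\le 2$), lets me apply Lemma~\ref{lem:avg} with $\beta_\setu=\gamma_{\setu\cup\setw}$ to each inner $[\theta_s(\cdots;\{\gamma_{\setu\cup\setw}\})]^\lambda$ separately. The net bound is
\[
[T_{d,s}(z_1^*,\ldots,z_s^*)]^\lambda \;\le\; \frac{\tau}{n}\sum_{\setw\subseteq\{s+1:d\}} [2\zeta(\alpha\lambda)]^{|\setw|}\, A(s,\setw)\, B(s,\setw),
\]
where I abbreviate $A(s,\setw) := \sum_{s\in\setu\subseteq\{1:s\}} \gamma_{\setu\cup\setw}^\lambda [2\zeta(\alpha\lambda)]^{|\setu|}$ and $B(s,\setw) := \sum_{\setu\subseteq\{1:s\}} \gamma_{\setu\cup\setw}^\lambda [2\zeta(\alpha\lambda)]^{|\setu|}$.

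To prove \eqref{eq:final}, I would change variables $\setv=\setu\cup\setw$ inside each inner sum (so $\setu=\setv\cap\{1:s\}$ and $\setw=\setv\cap\{s+1:d\}$) and then make two relaxations: drop the coupling requirement that $\setv_1\cap\{s+1:d\} = \setv_2\cap\{s+1:d\}$, and use $[2\zeta(\alpha\lambda)]^{-|\setw|}\le 1$. The resulting double sum factorises over $\setv_1,\setv_2\subseteq\{1:d\}$; swapping the order of summation, $\sum_{s=1}^d \sum_{s\in\setv_1} = \sum_{\emptyset\ne\setv_1} |\setv_1|$, which produces the $|\setu|$ factor appearing in \eqref{eq:final}. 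Taking $1/\lambda$-th roots then completes the first part.

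For the improved bound under \eqref{eq:decay}, the key idea is to apply the decay condition to a single factor rather than both, namely $A(s,\setw) \le \xi\,[2\zeta(\alpha\lambda)]^{-|\setw|}\,A_0(s)$ with $A_0(s) := \sum_{s\in\setu\subseteq\{1:s\}} \gamma_\setu^\lambda [2\zeta(\alpha\lambda)]^{|\setu|}$. The $[2\zeta(\alpha\lambda)]^{\pm|\setw|}$ factors cancel, leaving $\sum_\setw B(s,\setw)$, which after the change of variables $\setv_2=\setu\cup\setw$ and the inequality $[2\zeta(\alpha\lambda)]^{|\setv_2\cap\{1:s\}|} \le [2\zeta(\alpha\lambda)]^{|\setv_2|}$ is at most the second sum on the right-hand side of \eqref{eq:final}. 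The combinatorial identity that drives the $|\setu|\to 1$ improvement is that $s\in\setu$ combined with $\setu\subseteq\{1:s\}$ forces $s = \max(\setu)$; hence as $s$ ranges over $\{1,\ldots,d\}$ each non-empty $\setu$ contributes exactly once and $\sum_{s=1}^d A_0(s) = \sum_{\emptyset\ne\setu\subseteq\{1:d\}} \gamma_\setu^\lambda [2\zeta(\alpha\lambda)]^{|\setu|}$ with no extra $|\setu|$ factor, which yields $\tau\to\tau\xi$ in the final bound.

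The main obstacle lies in the careful bookkeeping of the $(\setu,\setw)\leftrightarrow\setv$ change of variables and in choosing the right relaxation at each step: in the general-weight case one trades the coupling constraint for the $|\setu|$ factor, whereas under \eqref{eq:decay} a single invocation of the decay inequality replaces that relaxation, at the price of only the factor $\xi$. Once this is set up correctly, passing from the bound on $[S_d(\bsz^*)]^\lambda$ to the stated estimate on $S_d(\bsz^*)$ is immediate.
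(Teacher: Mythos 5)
Your proof is correct and follows essentially the same route as the paper: the decomposition \eqref{eq:Sd-decomp}, the minimum-is-at-most-average argument applied to $[T_{d,s}]^\lambda$, Jensen's inequality on the $\setw$-sum, and Lemma~\ref{lem:avg} invoked with $\beta_\setu=\gamma_{\setu\cup\setw}$ for each $\setw\subseteq\{s+1:d\}$. The only cosmetic difference is that you decouple the double sum over $(\setv_1,\setv_2)$ after dropping the constraint that they share the same tail $\setw$, whereas the paper distributes $[2\zeta(2\alpha)]^\lambda\le[2\zeta(\alpha\lambda)]^2$ across both factors and bounds the sum over $\setw$ of the product by the maximum of one factor times the sum of the other; both variants yield the identical bound \eqref{eq:final} and the identical $\tau\,\xi$ improvement under \eqref{eq:decay}.
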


\begin{proof}
Let $\bsz^* = (z_1^*,\ldots,z_d^*)$ denote the generating vector obtained
from Algorithm~\ref{alg}. We have from \eqref{eq:Sd-decomp} that
\begin{align*}
  S_d(\bsz^*)
  \,=\, \sum_{s=1}^d T_{d,s} \big(z_1^*,\ldots,z_s^*\big).
\end{align*}

For each $s=1,\ldots,d$, the component $z_s^*$ is chosen to minimize the
quantity $T_{d,s} \big(z_1^*,\ldots,z_{s-1}^*,z_s\big)$ over all
$z_s\in\{1,\ldots,n-1\}$. Since the minimum must be smaller than or equal
to the average, for all $\lambda\in (\frac{1}{\alpha},1]$ we have
\begin{align*}
  &\big[T_{d,s} \big(z_1^*,\ldots,z_s^*\big)\big]^\lambda
  \,\le\, \frac{1}{n-1} \sum_{z_s=1}^{n-1}
  \big[T_{d,s} \big(z_1^*,\ldots,z_{s-1}^*,z_s\big)\big]^\lambda \\
  &\qquad\,=\, \frac{1}{n-1} \sum_{z_s=1}^{n-1}  \bigg(\sum_{\setw\subseteq\{s+1:d\}} [2\zeta(2\alpha)]^{|\setw|}
  \theta_s \big(z_1^*,\ldots,z_{s-1}^*,z_s;\{\gamma_{\setu\cup\setw}\}_{\setu\subseteq\{1:s\}}\big)\bigg)^\lambda \\
  &\qquad\,\le\, \frac{1}{n-1} \sum_{z_s=1}^{n-1}  \sum_{\setw\subseteq\{s+1:d\}} [2\zeta(2\alpha)]^{\lambda|\setw|}
  \big[\theta_s \big(z_1^*,\ldots,z_{s-1}^*,z_s;\{\gamma_{\setu\cup\setw}\}_{\setu\subseteq\{1:s\}}\big)\big]^\lambda \\
  &\qquad\,=\, \sum_{\setw\subseteq\{s+1:d\}} [2\zeta(2\alpha)]^{\lambda|\setw|}
  \bigg(\frac{1}{n-1} \sum_{z_s=1}^{n-1}
  \big[\theta_s \big(z_1^*,\ldots,z_{s-1}^*,z_s;\{\gamma_{\setu\cup\setw}\}_{\setu\subseteq\{1:s\}}\big)\big]^\lambda \bigg),
\end{align*}
where we used Jensen's inequality.

Now for every $\setw \subseteq \{s+1:d\}$, we apply Lemma~\ref{lem:avg}
with $z_1 = z_1^*$, \ldots, $z_{s-1} = z_{s-1}^*$ and input sequence
$\beta_\setu = \gamma_{\setu\cup\setw}$ for each $\setu\subseteq\{1:s\}$.
In other words, Lemma~\ref{lem:avg} is applied $2^{d-s}$ times, each time
with a different input sequence depending on $\setw$. Using
$[2\zeta(2\alpha)]^{\lambda} \le [2\zeta(\alpha\lambda)]^2$ and
\eqref{eq:theta-bound}, we obtain
\begin{align*}
  &\big[T_{d,s} \big(z_1^*,\ldots,z_s^*\big)\big]^\lambda \\
  &\qquad\,\le\, \sum_{\setw\subseteq\{s+1:d\}} [2\zeta(\alpha\lambda)]^{2|\setw|}
  \frac{\tau}{n}
  \bigg(\sum_{s\in\setu\subseteq\{1:s\}} \gamma_{\setu\cup\setw}^\lambda\, [2\zeta(\alpha\lambda)]^{|\setu|}\bigg)
  \bigg(\sum_{\setu\subseteq\{1:s\}} \gamma_{\setu\cup\setw}^\lambda\, [2\zeta(\alpha\lambda)]^{|\setu|}\bigg) \\
  &\qquad\,=\, \frac{\tau}{n}
  \sum_{\setw\subseteq\{s+1:d\}}
  \bigg(\sum_{s\in\setu\subseteq\{1:s\}} \gamma_{\setu\cup\setw}^\lambda\, [2\zeta(\alpha\lambda)]^{|\setu\cup\setw|}\bigg)
  \bigg(\sum_{\setu\subseteq\{1:s\}} \gamma_{\setu\cup\setw}^\lambda\, [2\zeta(\alpha\lambda)]^{|\setu\cup\setw|}\bigg) \\
  &\qquad\,\le\, \frac{\tau}{n}
  \bigg(\!\max_{\setw\subseteq\{s+1:d\}}\!\!
  \sum_{s\in\setu\subseteq\{1:s\}} \!\!\gamma_{\setu\cup\setw}^\lambda\, [2\zeta(\alpha\lambda)]^{|\setu\cup\setw|}\!\bigg)
  \bigg(\!\sum_{\setw\subseteq\{s+1:d\}}\!
  \sum_{\setu\subseteq\{1:s\}} \!\!\gamma_{\setu\cup\setw}^\lambda\, [2\zeta(\alpha\lambda)]^{|\setu\cup\setw|}\!\bigg) \\
  &\qquad\,=\, \frac{\tau}{n}
  \bigg(\max_{\setw\subseteq\{s+1:d\}}
  \sum_{s\in\setu\subseteq\{1:s\}} \gamma_{\setu\cup\setw}^\lambda\, [2\zeta(\alpha\lambda)]^{|\setu\cup\setw|}\bigg)
  \bigg(\sum_{\setu\subseteq\{1:d\}}
  \gamma_{\setu}^\lambda\, [2\zeta(\alpha\lambda)]^{|\setu|}\bigg).
\end{align*}
This leads to
\begin{align} \label{eq:max}
  S_{d}(\bsz^*)
  &\,\le\, \sum_{s=1}^{d}
  \bigg[ \frac{\tau}{n}
  \bigg(\max_{\setw\subseteq\{s+1:d\}}
  \sum_{s\in\setu\subseteq\{1:s\}} \gamma_{\setu\cup\setw}^\lambda\, [2\zeta(\alpha\lambda)]^{|\setu\cup\setw|}\bigg)
  \bigg(\sum_{\setu\subseteq\{1:d\}}
  \gamma_{\setu}^\lambda\, [2\zeta(\alpha\lambda)]^{|\setu|}\bigg) \bigg]^{1/\lambda} \nonumber\\
  &\,\le\,
  \bigg[ \frac{\tau}{n}
  \bigg(\sum_{s=1}^{d} \max_{\setw\subseteq\{s+1:d\}}
  \sum_{s\in\setu\subseteq\{1:s\}} \gamma_{\setu\cup\setw}^\lambda\, [2\zeta(\alpha\lambda)]^{|\setu\cup\setw|}\bigg)
  \bigg(\sum_{\setu\subseteq\{1:d\}}
  \gamma_{\setu}^\lambda\, [2\zeta(\alpha\lambda)]^{|\setu|}\bigg) \bigg]^{1/\lambda}. \nonumber\\
\end{align}

We remark at this point that, unlike a typical induction proof for a CBC
construction, there is no induction in this proof.

We consider two ways to proceed. The first way is to replace the maximum
in~\eqref{eq:max} by the sum, which yields
\begin{align*}
  S_{d}(\bsz^*)
  &\,\le\,
  \bigg[ \frac{\tau}{n} \bigg(\sum_{s=1}^{d}
  \sum_{s\in\setu\subseteq\{1:d\}} \gamma_{\setu}^\lambda\, [2\zeta(\alpha\lambda)]^{|\setu|}\bigg)
  \bigg(\sum_{\setu\subseteq\{1:d\}} \gamma_\setu^\lambda\, [2\zeta(\alpha\lambda)]^{|\setu|}\bigg) \bigg]^{1/\lambda} \\
  &\,=\,
  \bigg[ \frac{\tau}{n} \bigg(
  \sum_{\emptyset\ne\setu\subseteq\{1:d\}} |\setu|\,\gamma_{\setu}^\lambda\, [2\zeta(\alpha\lambda)]^{|\setu|}\bigg)
  \bigg(\sum_{\setu\subseteq\{1:d\}} \gamma_\setu^\lambda\, [2\zeta(\alpha\lambda)]^{|\setu|}\bigg) \bigg]^{1/\lambda}.
\end{align*}
The second way is to apply the assumption \eqref{eq:decay} in
\eqref{eq:max}, so that the maximum drops out to yield
\begin{align*}
  S_d(\bsz^*)
  &\,\le\,
  \bigg[ \frac{\tau\,\xi}{n} \bigg(\sum_{s=1}^{d}
  \sum_{s\in\setu\subseteq\{1:s\}} \gamma_{\setu}^\lambda\, [2\zeta(\alpha\lambda)]^{|\setu|}\bigg)
  \bigg(\sum_{\setu\subseteq\{1:d\}} \gamma_\setu^\lambda\, [2\zeta(\alpha\lambda)]^{|\setu|}\bigg) \bigg]^{1/\lambda} \\
  &\,=\,
  \bigg[ \frac{\tau\,\xi}{n} \bigg(
  \sum_{\emptyset\ne\setu\subseteq\{1:d\}} \gamma_{\setu}^\lambda\, [2\zeta(\alpha\lambda)]^{|\setu|}\bigg)
  \bigg(\sum_{\setu\subseteq\{1:d\}} \gamma_\setu^\lambda\, [2\zeta(\alpha\lambda)]^{|\setu|}\bigg) \bigg]^{1/\lambda},
\end{align*}
which does not contain the factor $|\setu|$ inside the first sum.
\end{proof}

We summarize the main conclusion of this paper in the following theorem,
which states that we achieve the best possible convergence rate for
lattice algorithms as shown in \cite{BKUV17}.

\begin{theorem} \label{thm:final}
Given $d\ge 1$, $\alpha>1$ and weights
$\{\gamma_\setu\}_{\setu\subset\bbN}$, let $n$ be prime and $M>0$. The
lattice algorithm \eqref{eq:Af}, with index set \eqref{eq:AdM} and
generating vector $\bsz$ obtained from the CBC construction following
Algorithm~\ref{alg}, satisfies for all $\lambda\in (\frac{1}{\alpha},1]$,
\begin{align*}
  e^{\rm wor\mbox{-}app}_{n,d,M}(\bsz)
  &\,\le\, \bigg(\frac{1}{M} + M\, S_d(\bsz) \bigg)^{1/2} \\
  &\,\le\, \Bigg(\frac{1}{M} + M \bigg[\frac{\tau}{n} \bigg(
  \sum_{\emptyset\ne\setu\subseteq\{1:d\}} |\setu|\,\gamma_{\setu}^\lambda\, [2\zeta(\alpha\lambda)]^{|\setu|}\bigg)
  \bigg(\sum_{\setu\subseteq\{1:d\}} \gamma_\setu^\lambda\, [2\zeta(\alpha\lambda)]^{|\setu|}\bigg)\bigg]^{1/\lambda}
 \Bigg)^{1/2},
\end{align*}
where $\tau = \max(6,2.5+2^{2\alpha\lambda+1})$.

Taking $M = n^{1/(2\lambda)}$, we obtain a simplified upper bound
\begin{align*}
  e^{\rm wor\mbox{-}app}_{n,d,M}(\bsz)
  \,\le\, \frac{\sqrt{2}\,\tau^{1/(2\lambda)}}{n^{1/(4\lambda)}} \bigg(
  \sum_{\setu\subseteq\{1:d\}} \max(|\setu|,1)\,\gamma_{\setu}^\lambda\, [2\zeta(\alpha\lambda)]^{|\setu|}\bigg)^{1/\lambda}.
\end{align*}
Hence
\[
  e^{\rm wor\mbox{-}app}_{n,d,M}(\bsz) \,=\, \calO(n^{-\alpha/4 + \delta}), \quad\delta>0,
\]
where the implied constant is independent of $d$ provided that
\[
  \sum_{\setu\subset\bbN,\,|\setu|<\infty} \max(|\setu|,1)\,\gamma_{\setu}^{\frac{1}{\alpha-4\delta}}\,
  [2\zeta\big(\tfrac{\alpha}{\alpha-4\delta}\big)]^{|\setu|}
  \,<\, \infty.
\]

If the weights satisfy \eqref{eq:decay} for some $\xi\ge1$ then the
$|\setu|$ factor inside the sums can be replaced by $1$ as long as $\tau$
is replaced by $\tau\,\xi$.
\end{theorem}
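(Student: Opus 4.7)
The plan is straightforward: this theorem is a corollary of Theorem~\ref{thm:main} together with the standard inequalities \eqref{eq:bal} and \eqref{eq:Sd}, followed by a simple parameter-balancing argument. I would chain three already-proven bounds and then optimize the free parameter $M$.

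First, for the two displayed inequalities: the outer inequality is just \eqref{eq:bal} composed with $E_d(\bsz)\le M\,S_d(\bsz)$ from \eqref{eq:Sd}, and the inner inequality is obtained by substituting the bound on $S_d(\bsz)$ from Theorem~\ref{thm:main}. This part requires no new computation; it is pure assembly.

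Second, to get the simplified bound, I set $M = n^{1/(2\lambda)}$, so that $1/M = n^{-1/(2\lambda)}$ and the cubature term $M\,S_d(\bsz)$ is bounded by $n^{1/(2\lambda)-1/\lambda}\,\tau^{1/\lambda}\,C^{1/\lambda} = n^{-1/(2\lambda)}\,\tau^{1/\lambda}\,C^{1/\lambda}$, where $C$ denotes the product of the two sums appearing in Theorem~\ref{thm:main}. Writing $T := \sum_{\setu\subseteq\{1:d\}} \max(|\setu|,1)\,\gamma_\setu^\lambda\,[2\zeta(\alpha\lambda)]^{|\setu|}$, both factors in $C$ are dominated by $T$, hence $C \le T^2$. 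Because $\gamma_\emptyset = 1$ forces $T \ge 1$ and $\tau \ge 6$, the cubature term is at least $1/M$, so via $\sqrt{a+b}\le \sqrt{2\,\max(a,b)}$ I conclude $e^{\rm wor\mbox{-}app}_{n,d,M}(\bsz) \le \sqrt{2}\,\tau^{1/(2\lambda)}\,n^{-1/(4\lambda)}\,T^{1/\lambda}$, which is the stated simplified bound.

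Finally, for the asymptotic rate I would pick $\lambda = 1/(\alpha-4\delta)\in(1/\alpha,1]$ for small $\delta>0$; this gives exponent $-1/(4\lambda) = -\alpha/4+\delta$, and the assumed summability condition is precisely what makes $T$ bounded uniformly in $d$, yielding dimension-independence of the implied constant. The last assertion, under hypothesis~\eqref{eq:decay}, follows identically from the alternative form of Theorem~\ref{thm:main} in which $\tau$ is replaced by $\tau\,\xi$ and the $|\setu|$ factor is replaced by $1$. There is no genuine obstacle: all the analytic work (the averaging argument, the dimension-wise decomposition, the handling of the aliasing sums) has been absorbed into Theorem~\ref{thm:main}, and the only subtlety is verifying that $1/M$ is dominated by the cubature term so that a single clean $n^{-1/(4\lambda)}$ rate emerges.
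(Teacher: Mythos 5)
Your proposal is correct and follows exactly the route of the paper, whose own proof is the one-line observation that the theorem results from combining \eqref{eq:bal}, \eqref{eq:Sd} and \eqref{eq:final} and then balancing via \eqref{eq:chooseM} with $\lambda = 1/(\alpha-4\delta)$. Your additional details (bounding both factors by $T$, noting $T\ge 1$ from $\gamma_\emptyset=1$ so that $1/M$ is dominated, and using $\sqrt{a+b}\le\sqrt{2\max(a,b)}$) correctly fill in the derivation of the simplified bound that the paper leaves implicit.
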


\begin{proof}
The theorem is a consequence of combining \eqref{eq:bal}, \eqref{eq:Sd},
\eqref{eq:final} and then balancing the terms by choosing $M$ in relation
to $n$ according to \eqref{eq:chooseM} and then taking $\lambda =
1/(\alpha-4\delta)$.
\end{proof}

As a closing remark we note that $\max(|\setu|,1)\le (e^{1/e})^{|\setu|} =
(1.4446\cdots)^{|\setu|}$. This means that the constant is independent of
$d$ if
\[
  \sum_{\setu\subset\bbN,\,|\setu|<\infty} \gamma_{\setu}^{\frac{1}{\alpha-4\delta}}\,
  [2e^{1/e} \, \zeta\big(\tfrac{\alpha}{\alpha-4\delta}\big)]^{|\setu|}
  \,<\, \infty.
\]
This condition is slightly more demanding, but easier on the eyes, and
suggests that the factor of $|\setu|$ which popped up in the estimates is
not really worse than some of the other estimates which were already made
on the way.

\paragraph{Acknowledgements}
We gratefully acknowledge the financial support from the Australian
Research Council (DP180101356).

\bibliographystyle{plain}

\end{document}